\begin{document}

\title{Perverse Sheaves on the Stack of \(G\)-Zips}
\author{Christopher Lang}

\hypersetup{
    pdftitle={Perverse Sheaves on the Stack of G-Zips}, 
    pdfauthor={Christopher Lang},       
}

\maketitle

\section{Introduction}
\addcontentsline{toc}{section}{Introduction}

If we have a Shimura variety \(\Scal\) of abelian type associated to a Shimura datum \((\GG,X)\), one can look at its reduction \(\Scal_0\) at a prime \(p\) of good reduction and get a smooth map 
\[\zeta: \Scal_0 \to G\Zip^\mu,\]
see \cite{shen2021stratificationsgoodreductionsshimura}. Here, \(G\) is the base change to \(\FF_q\) of an integral model of the adjoint of \(\GG\), and \(\mu\) is the reduction of a cocharacter from the Shimura datum \((\GG,X)\). The idea is that the stack of \(G\)-zips is easier to study than Shimura varieties. Let's explain why that is and let's give a rough definition of \(G\)-zips in the process.

If the Shimura variety is of PEL type, so that it is the moduli space of abelian varieties with additional structure, the map \(\zeta\) associates to an abelian variety its first de Rham cohomology, which naturally comes equipped with the Hodge filtration and the conjugate filtration, and we have Frobenius-linear isomorphisms between the graded pieces of the two filtrations. Such an object, namely a vector space with two filtrations and Frobenius-linear isomorphisms of the graded pieces, is called an F-zip. These are objects of linear algebra, and hence easier to understand than Shimura varieties. Now, a \(G\)-zip is just an F-zip that is equipped with a structure of the group \(G\). By that we mean an exact tensor functor from the category of representations of \(G\) to the category of F-zips. The connected components of the stack of \(G\)-zips are classified by conjugacy classes of cocharacters, and one such connected component is called \(G\Zip^\mu\), the \(G\)-zips of type \(\mu\).

The fibres of the map \(\zeta\) are the Ekedahl-Oort strata of the Shimura variety. Furthermore, the stack of \(G\)-zips of type \(\mu\) can be identified as a quotient stack \([E_\mu\backslash G]\), where \(E_\mu\) is a subgroup of \(G\times G\) acting on \(G\) by \((g_1,g_2).g=g_1gg_2\inv\). The underlying topological space of \(G\Zip^\mu\) can be explicitly computed using the Weyl group of \(G\), and the stabilizers of the quotient stack are semi-direct products of a finite group of Lie type with a connected unipotent group. 

The idea is now that if you are interested in some type of object over the Shimura variety, a good starting point can be to look at this type of object over the stack of \(G\)-zips and then pull it back to the Shimura variety. For example you can do this for the category of perverse sheaves: On the stack of \(G\)-zips of type \(\mu\), these are \(E_\mu\)-equivariant perverse sheaves on \(G\), and since the map \(\zeta\) is smooth, we obtain a shifted pullback functor to the category of perverse sheaves on the Shimura variety. 

\subsection*{Perverse Sheaves on \texorpdfstring{\(G\)}{G}-Zips}
We compute the simple perverse sheaves on the stack of \(G\)-zips of type \(\mu:\Gm \to G\) in several examples. It follows from the general theory that these can be computed using the following principle:
\begin{introtheorem}[Theorem \ref{thm_simp_perv}]
    Given a connected reductive group \(G\) over \(\FF_q\) and a cocharacter \(\mu:\Gm[\FFbar_q]\to G_{\FFbar_q}\), the simple perverse sheaves on the stack of \(G\)-zips of type \(\mu\) are given by pairs \((w,\theta)\), where \(w\) is an orbit of the \(E_\mu\) action on \(G\) and \(\theta\) is an irreducible \(\QQbar_l\)-representation of the group of connected components of the stabilizer of this group action at \(w\).
\end{introtheorem}
The set of orbits is \({}^IW\), where \(W\) is the Weyl group of \(G\) and \(I\) is the type of \(\mu\), and the group of connected components of the stabilizer of this group action at \(w\) is known to be 
\[\Pi_w\coloneqq \set{h\in H_w(\FFbar_p)}{h=\phi\big(\dot y \dot w h (\dot y \dot w)\inv\big)},\]
where \(H_w\) is some Levi-subgroup of \(G_{\FFbar_q}\) and \(y\in G\) depends only on \(I\). This can be understood as a group over \(\FFbar_p\) together with a twisted Frobenius action, which yields a descent datum to \(\FF_q\), and the group above are just the \(\FF_q\)-valued points thereof. Hence one needs to understand the irreducible \(\QQbar_l\)-representations of certain finite groups of Lie type, and these can in principle be understood with Deligne-Lusztig theory. 
We list some examples:
\begin{itemize}
    \item If one takes the constant cocharacter, there is only one orbit and the corresponding group is \(\Pi_w=G(\FF_q)\).
    \item For \(G=\GL_n\) and the cocharacter that corresponds to a Borel subgroup, the set of orbits is \(W=\S_n\), the group \(H_w\) is a torus and \(\Pi_w\) is a product of some unit groups of finite fields. In particular, these are abelian groups, so their irreducible representations can be written down explicitly.
    \item For the symplectic group \(\Sp_{2n}\) and the cocharacter that corresponds to the parabolic with two \(n\times n\)-blocks on the diagonal, the set of orbits is \(\{0,1\}^n\), and for the closed orbit \(w=\id\) we have \(\Pi_{\id} = \U_n(\FF_q)\), while for the open orbit we have \(\Pi_w = \GL_n(\FF_q)\). The other \(\Pi_w\) we computed for \(n=2\), where they are again abelian groups.
    \item If we take the general unitary group \(G=\GU_n\) with a cocharacter \(\mu\) of type \((1,n-1)\), then the set of orbits is totally ordered of size \(n\). For each orbit \(w\), the group \(\Pi_w\) is an extension of \(\GU_{n_w}(\FF_q)\) by some abelian group, where \(0\le n_w< n\) is an integer depending on \(w\).
\end{itemize}

\subsection*{Notations}
\addcontentsline{toc}{section}{Notations}
\label{section_notations}
\begin{itemize}
    \item We once and for all fix a prime \(p\) and an algebraic closure \(\FFbar_p\) of \(\FF_p\), and if \(q\) is a power of \(p\), we view \(\FF_q\) as a subfield of \(\FFbar_q\coloneqq \FFbar_p\).
    \item If we have a scheme \(X\) over \(S\) and a morphism \(T\to S\), we write \(X_T\coloneqq X \tensor{S}T\). If \(T=\Spec A\), we also write \(X_A\coloneqq X_T\).
    \item The natural numbers \(\NN\) contain \(0\).
    \item The symbol \(\phi\) will always denote the \(q\)-Frobenius morphism whenever we live over \(\FF_q\). For matrix groups this means raising the matrix entries to the \(q\)-th power and is denoted by \(M^{(q)}\), and we also get an induced Frobenius on Weyl groups, compare \cite[(3.11)]{pink2011algebraiczipdata}, which we will again denote by \(\phi\). 
    \item If we have a group \(G\) and \(x\in G\), we write 
    \(\mathrm{int}(x): G\to G, \quad g\mapsto {}^xg\coloneqq xgx\inv.\) 
    We also write \({}^xA\coloneqq xAx\inv\) for some subset \(A\subseteq G\). 
    \item \(\1_n\) denotes the identity \(n\times n\)-matrix, \(\diag(a_1,\dots,a_n)\) the diagonal \(n\times n\)-matrix with the \(a_i\) on the diagonal and \(\mathrm{antidiag}_n(c)\) the \(n\times n\)-matrix with \(a_{ij}=c\) if \(i+j=n+1\) and \(0\) otherwise.
    \item All reductive groups \(G\) are assumed to be connected. As we basically always consider reductive groups over finite fields, we can choose a maximal torus and a Borel subgroup \(T\subseteq B\subseteq G\), so that we get the Weyl group  with its simple reflections. We write \(\Ru P\) for the unipotent radical of a parabolic subgroup of \(G\), and \(\Par_I\) for the scheme of parabolic subgroups of \(G\) of type \(I\), where \(I\) is a subset of the simple reflections.
    \item Given a group scheme \(G\) over \(L\) and a field extension \(L/K\), we write 
    \[\Res_{L/K}G : \Sch/K\to \Sets,\quad S\mapsto G(S_L).\]
    \item If we have a reductive group \(G\) with its Weyl group \(W\), we fix a set of representatives \(\dot w\in G\) for \(w\in W\) that satisfies \((wv)^\cdot = \dot w \dot v\) if \(\ell(wv)=\ell(w) \ell(v)\).
\end{itemize}

\subsection*{Acknowledgements}
I want to thank my PhD advisor Torsten Wedhorn, who has mentored me throughout my journey of learning algebraic geometry over the last four years. You have been a great supervisor and always gave me a nice intuition.

This project was funded by the Deutsche Forschungsgemeinschaft (DFG, German Research Foundation) TRR 326 Geometry and Arithmetic of Uniformized Structures, project number 444845124.
\newpage
\section{Perverse Sheaves on \texorpdfstring{\(G\)}{G}-Zips}
\label{chapter_perv_sh}

We look at the stack of \(G\)-zips that was introduced in \cite{Pink_2015}. We will mainly be using the description as a quotient stack, so let's recall the necessary vocabulary. In what follows we will only look at connected groups, which simplifies some notations from this paper. So let \(G\) be a connected reductive group over a finite field \(\FF_q\). The stack of \(G\)-zips is a disjoint union of the \(G\)-zips of type \(\mu:\Gm[\FFbar_q]\to G_{\FFbar_q}\), where the disjoint union is taken over conjugacy classes of cocharacters. The stack of \(G\)-zips of type \(\mu\) is called \(G\Zip^\mu\) and can be written as a quotient stack as follows. Let \(\kappa\) be the reflex field of the conjugation class of the cocharacter \(\mu\), and we can assume that \(\mu\) is defined over \(\kappa\), as we work over a finite field. This is a finite field extension of \(\FF_q\). The cocharacter \(\mu\) gives rise to opposite parabolic subgroups \((P_-(\mu),P_+(\mu))\) and a Levi subgroup \(L\coloneqq L(\mu)\coloneqq P_-(\mu)\cap P_+(\mu)\) defined by the condition that \(\Lie(P_-(\mu))\) is the sum of the non-positive weight space of \(\mu\) in \(\Lie(G)\) (resp.\ non-negative for \(\Lie(P_+(\mu)\)). Set
\[P=P_-,\qquad Q=\phi^*(P_+)\qquad\text{and}\qquad M=\phi^*(L)\]
with projections \(P\to L\) and \(Q\to M\), both denoted \(x\mapsto \bar x\). We define 
\[E_\mu \coloneqq \set{(x,y)\in P\times Q}{\phi(\bar x)=\bar y},\]
which acts on \(G\) via \((x,y)\cdot g \coloneqq xgy\inv\). Then we have 
\[G\Zip^\mu = [E_\mu\backslash G],\]
see \cite[Prop 3.11]{Pink_2015}. All of these groups and stacks are defined over \(\kappa\).

We use this description as a quotient stack to describe perverse sheaves on the stack of \(G\)-zips of type \(\mu\). Given a scheme \(X\) over a field, the category of perverse sheaves \(\P(X)\) with \(\QQbar_l\)-coefficients is defined in \cite[III.1.1]{Kiehl_Weissauer_2001}. Then the category of perverse sheaves on a quotient stack \([G\backslash X]\) can be defined as \(G\)-equivariant perverse sheaves on \(X\), as explained in \cite[6.2]{achar2021perverse}: To define this category we need the maps
\[\begin{tikzcd}
    G\times G\times X \rar["\id\times a", shift left=5] \rar["m\times\id"] \rar["\pr_{23}", shift right=5] &[.5em] G\times X \rar["a", shift left] \rar["\pr_2"', shift right] & X,
\end{tikzcd}\]
where \(m\) is the multiplication of \(G\) and \(a\) is the action of \(G\) on \(X\).

\begin{definition}
    The category of equivariant perverse sheaves \(\P_G(X)\) is defined as pairs \((\Fscr,\psi)\), where \(\Fscr\in \P(X)\) and \(\psi: \pr_2^*\Fscr\isom a^*\Fscr\), satisfying the cocycle condition
    \((\id\times a)^*\psi \circ \pr_{23}^*\psi = (m\times \id)^*\psi\).
    The morphisms 
    \((\Fscr,\psi)\to(\Fscr',\psi')\) 
    are those morphisms \(f:\Fscr\to\Fscr'\) in \(\P(X)\) that make the square
    \[\begin{tikzcd}
        \pr_2^*\Fscr \rar["\psi"] \dar["\pr_2^*f"'] & a^*\Fscr \dar["a^*f"]\\
        \pr_2^*\Fscr' \rar["\psi'"] & a^*\Fscr'
    \end{tikzcd}\]
    commute.
\end{definition}

\begin{remark}
    One can also define perverse sheaves on algebraic stacks locally of finite type, which yields for quotient stacks with a smooth affine group an equivalent description to equivariant perverse sheaves, see \cite[5.5]{laszlo2006perverse}.
\end{remark}

\begin{theorem}
\label{thm_simp_perv}
    Given a connected reductive group \(G\) over \(\FF_q\) and a cocharacter \(\mu:\Gm[\FFbar_q]\to G_{\FFbar_q}\), the simple objects of \(\P(G\Zip^\mu_{\FFbar_q})=\P([E_\mu\backslash G]_{\FFbar_q})=\P_{E_{\mu},\FFbar_q}(G_{\FFbar_q})\) are given by pairs \((w,\theta)\), where \(w\) is an orbit of the \(E_\mu\)-action on \(G\) and \(\theta\) is an irreducible \(\QQbar_l\)-representation of the geometric points of the group of connected components of the stabilizer of this group action at \(w\).
\end{theorem}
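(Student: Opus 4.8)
The plan is to deduce the theorem from the general structure theory of equivariant perverse sheaves for a connected smooth group acting with finitely many orbits, feeding in what is already known about the $E_\mu$-action on $G$ from the theory of $G$-zips. I would isolate at the outset three inputs, all taken over $\FFbar_q$: (i) $E_\mu$ is a connected smooth affine algebraic group; (ii) the $E_\mu$-action on $G$ has only finitely many orbits, so that $G=\bigsqcup_{w\in{}^IW}O_w$; and (iii) each point stabilizer $\mathrm{Stab}_{E_\mu}(g)$ is smooth with connected (unipotent) identity component, so that $\pi_0(\mathrm{Stab}_{E_\mu}(g))$ is a finite \'etale group scheme. Input (i) is elementary: by definition $E_\mu=P\times_M Q$ is the fibre product of the maps $x\mapsto\phi(\bar x)\colon P\to M$ and $y\mapsto\bar y\colon Q\to M$; the latter is the quotient by the smooth unipotent radical $\Ru Q$, hence smooth and surjective with connected fibres, and so is its base change $E_\mu\to P$; as $P$ is smooth and connected, $E_\mu$ is smooth and connected, and it is affine as a closed subgroup of $G\times G$. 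Inputs (ii) and (iii) are precisely the content of the theory of $G$-zips and algebraic zip data: see \cite{Pink_2015, pink2011algebraiczipdata}, where the orbits are parametrised by the finite set ${}^IW$ and the stabilizers are exhibited as semidirect products of a finite group of Lie type $\Pi_w$ with a connected unipotent group, whence $\pi_0(\mathrm{Stab}_{E_\mu}(g_w))=\Pi_w$ in the notation of the introduction.

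Granting (i)--(iii), the orbits $O_w$ are smooth locally closed subvarieties, so they stratify $G_{\FFbar_q}$, equivalently $[E_\mu\backslash G]_{\FFbar_q}$ is stratified by the gerbes $[E_\mu\backslash O_w]\cong B\,\mathrm{Stab}_{E_\mu}(g_w)$. I would then invoke the standard dictionary for perverse sheaves in this setting, both in the stacky formulation of \cite{laszlo2006perverse} and in the equivalent equivariant formulation of \cite{achar2021perverse}: because $E_\mu$ is connected, the forgetful functor $\P_{E_\mu}(G)\to\P(G)$ is fully faithful with image a Serre subcategory, its objects are constructible along the orbit stratification, and every simple object arises as an intermediate extension $\mathrm{IC}(\overline{O_w},\mathcal L)$ from a unique orbit $O_w$, where $\mathcal L$ is an irreducible $E_\mu$-equivariant $\QQbar_l$-local system on $O_w$; conversely every such intermediate extension is simple. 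Hence the simple objects of $\P([E_\mu\backslash G]_{\FFbar_q})=\P_{E_\mu,\FFbar_q}(G_{\FFbar_q})$ are in bijection with the pairs $(w,\mathcal L)$ just described.

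It remains to match the local systems $\mathcal L$ with representations $\theta$. Fixing $g_w\in O_w$ gives $O_w\cong E_\mu/\mathrm{Stab}_{E_\mu}(g_w)$, hence $[E_\mu\backslash O_w]\cong B\,\mathrm{Stab}_{E_\mu}(g_w)$; so an irreducible $E_\mu$-equivariant $\QQbar_l$-local system on $O_w$ is the same as an irreducible lisse $\QQbar_l$-sheaf on $B\,\mathrm{Stab}_{E_\mu}(g_w)$, i.e.\ an irreducible $\QQbar_l$-representation of the \'etale fundamental group of $B\,\mathrm{Stab}_{E_\mu}(g_w)$. Since the identity component of $\mathrm{Stab}_{E_\mu}(g_w)$, being connected, acts trivially on every finite set, that fundamental group is the finite group $\pi_0(\mathrm{Stab}_{E_\mu}(g_w))(\FFbar_q)$, i.e.\ the geometric points of the component group of the stabilizer. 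Thus irreducible $E_\mu$-equivariant local systems on $O_w$ correspond bijectively to irreducible $\QQbar_l$-representations $\theta$ of this finite group; and replacing $g_w$ by another point of $O_w$ conjugates the stabilizer and hence leaves the set of isomorphism classes of such $\theta$ unchanged. Combining this with the previous paragraph identifies the simple objects with pairs $(w,\theta)$, which is the assertion of the theorem.

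The step I expect to take the most care is the one carried out via stacks in the last two paragraphs: one needs the theory of perverse sheaves on the algebraic stack $[E_\mu\backslash G]$ (legitimately identified with $E_\mu$-equivariant perverse sheaves since $E_\mu$ is smooth and affine), the identification of its strata with classifying stacks $B\,\mathrm{Stab}_{E_\mu}(g_w)$, and the computation of lisse $\QQbar_l$-sheaves on those classifying stacks as representations of the finite component group --- all over the characteristic-$p$ base $\FFbar_q$, where one must keep track of the standing hypothesis $l\neq p$ and check that the (harmless) possibility of non-reduced infinitesimal structure in stabilizers plays no role, smoothness of the stabilizers from (iii) together with $l\neq p$ being exactly what guarantees this. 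By contrast, connectedness and smoothness of $E_\mu$, smoothness of the orbits, and the finiteness and stabilizer structure of the $E_\mu$-action are either immediate or quoted directly from \cite{Pink_2015, pink2011algebraiczipdata}, and the intermediate-extension dictionary is standard.
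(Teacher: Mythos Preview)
Your argument is correct and follows the same mathematical route as the paper: note that $E_\mu$ is connected, then invoke the standard classification of simple equivariant perverse sheaves for a connected group acting with finitely many orbits as intermediate extensions of irreducible equivariant local systems, the latter being representations of the component group of a point stabilizer. The only difference is packaging: the paper treats this classification as a black box, citing \cite[B.13]{Clausen2008THESC} (with the remark that although that reference works over $\CC$, the argument goes through over any algebraically closed field) and \cite[1.1, 1.4]{JOSHUA2022289}, whereas you have unpacked the contents of those citations and supplied the inputs (i)--(iii) explicitly; your version is more self-contained, the paper's buys brevity.
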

\begin{proof}
    Note that \(E_\mu\) is connected. Then this is precisely the statement from \cite[B.13]{Clausen2008THESC}. Note that in this paper all the schemes live over \(\CC\), but this hypothesis is not used throughout appendix B, so that the result also holds over any algebraically closed field. For a different reference see \cite[1.1 and 1.4]{JOSHUA2022289}.
\end{proof}

The orbits and the group of connected components of the stabilizers are described in \cite{Pink_2015}, so we introduce the vocabulary we need to compute them. Let \(T\subseteq B\subseteq G_{\FFbar_q}\) be a maximal torus and Borel subgroup. We may and will assume \(T\subseteq M\) and \(B\subseteq Q\).    
Then we obtain the Weyl-Group of \(G\) by
\[W = \Norm_{G(\FFbar_q)}(T(\FFbar_q))/T(\FFbar_q).\]
Let \(S\subseteq W\) be the simple elements with respect to \(T\subseteq B\subseteq G_{\FFbar_q}\), see \cite[21.41]{milne2017algebraic}. We define the length function on \(W\) by
\[\ell: W\to \NN,\quad w\mapsto \min\set{n\in\NN}{\exists s_1,\dots,s_n\in S: w=s_1\cdots s_n}.\]
For subsets \(K,K'\subseteq S\) we write \(W_K\) for the subgroup generated by \(K\) and \({}^K W\) (resp. \(W^{K'}\), resp. \({}^KW^{K'}\)) for the subset of \(w\in W\) that are of minimal length in the coset \(W_K w\) (resp. \(w W_{K'}\), resp. \(W_K w W_{K'}\)). Let \(w_{0,K}\) denote the unique element of maximal length in \(W_K\) and \(w_0\coloneqq w_{0,S}\). The relative Frobenius of \(G\) induces a Frobenius \(\phi:(W,S)\to (W,S)\) of Coxeter pairs, which in particular preserves length. (For \(\GL_n\), the relative Frobenius raises the matrix entries to the \(q\)-th power, so in this case \(\phi=\id_W\).) Let \(I,J\subseteq S\) be the type of \(P,Q\), see \cite[21.91]{milne2017algebraic} (parabolics can be conjugated to contain \(B\)). 
We have \(J={}^{w_0}\phi(I)\). 
Define
\begin{align*}
    y &= w_0 w_{0,I},\\
    \dot w &\in \Norm_{G(\FFbar_q)}(T(\FFbar_q)) \text{ a representative of }w\in W\text{, see \hyperref[section_notations]{Notations}.}
\end{align*}
For \(w\in W\), let \(K_w\) be the largest subset of \(J\cap {}^{w\inv} I\) such that \(\big(\phi\circ\intt(yw)\big)(K_w)=K_w\).

\begin{theorem}[{\cite[3.20]{Pink_2015}}]
    The orbits of \([E_\mu\backslash G]\) are given by \(\Gal(\FFbar_q/\kappa)\backslash {}^IW\) (as topological spaces).
\end{theorem}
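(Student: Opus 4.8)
I outline the argument behind \cite[3.20]{Pink_2015}, which splits into a purely geometric computation of the \(E_\mu\)-orbits over \(\FFbar_q\) and a Galois-descent step, carried out in that order.

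\textbf{Descent to \(\FFbar_q\).} For an algebraic group \(H\) acting on a scheme \(X\) over a field \(\kappa\), the underlying topological space of \([H\backslash X]\) is \(|X|\) equipped with the quotient topology for the equivalence relation identifying two points that become \(H\)-conjugate over a common field extension. The canonical map \(|X_{\FFbar_q}|\to|X_\kappa|\) is a closed surjection whose fibres are the \(\Gal(\FFbar_q/\kappa)\)-orbits; a short diagram chase — surjectivity is clear, and for injectivity one resolves a coincidence over \(\kappa\) against a Galois conjugate of one of the two points — shows that it induces a homeomorphism \(\Gal(\FFbar_q/\kappa)\backslash|[H\backslash X]_{\FFbar_q}|\cong|[H\backslash X]_\kappa|\). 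Applying this to \(H=E_\mu\) and \(X=G\), the theorem reduces to a \(\Gal(\FFbar_q/\kappa)\)-equivariant identification of \(|[E_\mu\backslash G]_{\FFbar_q}|\) with \({}^IW\); the Galois action on \({}^IW\) here is the residual one coming from the Frobenius \(\phi\) of the Coxeter pair \((W,S)\), and it preserves the subset \(I\) precisely because \(\kappa\) is the reflex field of the conjugacy class of \(\mu\).

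\textbf{The geometric orbit count.} Over \(k=\FFbar_q\) the data \((G,P,Q,\phi\colon L\to M)\) together with the fixed frame \(T\subseteq B\) (with \(T\) in the Levi and \(B\subseteq Q\)) form an algebraic zip datum in the sense of \cite{pink2011algebraiczipdata}, whose zip group is exactly \(E_\mu\). The element \(y=w_0w_{0,I}\) records the relative position of \(B\) and the canonical Borel contained in \(P\), and for each \(w\in W\) one forms the locally closed subvariety \(\mathcal O_w\coloneqq E_\mu\cdot\dot y\dot w\subseteq G\). Three things must be established: (i) every \(E_\mu\)-orbit equals some \(\mathcal O_w\); (ii) \(\mathcal O_w=\mathcal O_{w'}\) exactly when \(w\) and \(w'\) lie in the same coset for the \(I\)-action on the relevant side (after the twist by \(y\) and \(\phi\)), so that \({}^IW\) is a transversal; (iii) \(\dim\mathcal O_w\) and the closure incidences \(\mathcal O_{w'}\subseteq\overline{\mathcal O_w}\) are controlled by a partial order on \({}^IW\) refining the Bruhat order. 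Part (iii) then transports the quotient topology on \(|[E_\mu\backslash G]_{\FFbar_q}|\) onto \({}^IW\), and a last check that the whole construction is compatible with the Frobenius supplies the Galois-equivariance needed in the descent step.

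\textbf{The main obstacle.} The substance lies in (i) and (ii), and the crux is surjectivity in (i) — that no orbit is missed. I would run an induction on \(\ell(w)\) along the Bruhat stratification \(G=\coprod_{w}B\dot wB\): given \(g\in B\dot wB\), one uses the subgroup \(\Ru P\times\Ru Q\subseteq E_\mu\) together with the constraint \(\phi(\bar x)=\bar y\) on the Levi quotients to push \(g\) into the normal form \(\dot y\dot w\) with \(w\in{}^IW\). The delicate point is that \(E_\mu\) is not a direct product of unipotent groups — the left \(P\)-action and the right \(Q\)-action are coupled through \(\phi\) — so clearing an unwanted root subgroup forces one to solve equations of the shape \(u=v\cdot\phi(u')\) inside unipotent groups; the required surjectivity is the main technical lemma underlying the orbit classification for algebraic zip data. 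Granting it, the coincidences in (ii), the dimension count and closure relations in (iii), and the Galois-equivariance used in the descent are all comparatively routine bookkeeping with the Weyl group, the \(BN\)-pair, and the identities \(y=w_0w_{0,I}\) and \(J={}^{w_0}\phi(I)\) recorded above.
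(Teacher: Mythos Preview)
The paper does not prove this theorem at all; it simply records the statement with a citation to \cite[3.20]{Pink_2015}. Your proposal is therefore not in competition with any argument in the paper --- you have supplied an outline of what lies behind the cited reference, and as such a sketch it is essentially accurate: the two-stage structure (orbit classification over \(\FFbar_q\) via the general theory of algebraic zip data in \cite{pink2011algebraiczipdata}, followed by Galois descent to \(\kappa\)) is indeed how the result is obtained, and your identification of the surjectivity step in (i) as the technical heart is correct.

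One small correction worth flagging: the orbit representatives in \cite{Pink_2015} are taken of the form \(g\dot w\) for a fixed base point \(g\) (in the present setup effectively \(\dot y\)), and the redundancy in (ii) is not a single-sided \(W_I\)-coset relation but the more subtle equivalence relation on \(W\) generated by \(w\sim xw\phi(yxy^{-1})^{-1}\) for \(x\in W_I\); that \({}^IW\) forms a transversal for this relation is itself a combinatorial lemma in \cite{pink2011algebraiczipdata}. Your phrase ``same coset for the \(I\)-action on the relevant side'' elides this, though the spirit is right.
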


\begin{theorem}[{\cite[3.34]{Pink_2015}}]
    The geometric points of the group of connected components of the stabilizer at the orbit \(w\in {}^IW\) is given by
    \[\Pi_w = \set{h\in H_w(\FFbar_q)}{h=\phi\big(\dot y \dot w h (\dot y \dot w)\inv\big)},\]
    where \(H_w\) is the Levi subgroup of \(G_{\FFbar_q}\) of type \(K_w\) containing \(T\).
\end{theorem}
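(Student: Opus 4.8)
The statement is \cite[3.34]{Pink_2015}; we sketch the argument one would give. Since we are after geometric points we work over \(\FFbar_q\), and it suffices to compute the connected components of the scheme–theoretic stabilizer of the \(E_\mu\)–action at a fixed representative \(g=g_w\) of the orbit \(w\in{}^IW\); for \(g_w\) one may take the representative \(\dot y\dot w\) (the choice made in \cite{Pink_2015}). Because \(E_\mu\) acts by \((x,y)\cdot g = xgy\inv\), a pair \((x,y)\in E_\mu\) fixes \(g\) iff \(x = gyg\inv\), so the projection \((x,y)\mapsto y\) identifies the stabilizer, as a group scheme, with
\[S_g \coloneqq \set{y\in Q}{gyg\inv\in P\ \text{and}\ \phi(\overline{gyg\inv}) = \overline{y}}\]
(with inverse \(y\mapsto (gyg\inv,y)\)), where the overline denotes the Levi projections \(P\to L\), resp.\ \(Q\to M\). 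One uses the structure \(E_\mu\cong(\Ru P\times\Ru Q)\rtimes L\) arising from \(P=\Ru P\rtimes L\), \(Q=\Ru Q\rtimes M\) and the relation \(\overline{y}=\phi(\overline{x})\); in particular \(\dim E_\mu=\dim G\), compatibly with the orbits being finite in number.

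The heart of the proof will be to separate \(S_g\) into a ``Levi part'' and a ``unipotent part''. Writing \(y=u\,m\) with \(u\in\Ru Q\) and \(m\in M\), one first describes the locus \(\set{y\in Q}{gyg\inv\in P}=Q\cap g\inv Pg\) via the Bruhat decomposition relative to \((P,Q)\) together with the assumption \(w\in{}^IW\); this shows that the admissible Levi components \(m\) lie in the single standard Levi \(H_w\subseteq M\) of type \(K_w\), and that the remaining equation \(\phi(\overline{gyg\inv})=\overline{y}\) then cuts these down to exactly the \(h\in H_w\) with \(h=\phi\big(\dot y\dot w\,h\,(\dot y\dot w)\inv\big)\), i.e.\ to \(\Pi_w\). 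Making this precise uses the definitions \(Q=\phi^*(P_+)\), \(M=\phi^*(L)\), the relations \(J={}^{w_0}\phi(I)\) and \(y=w_0w_{0,I}\), and, crucially, the maximality condition defining \(K_w\): the fact that \(\phi\circ\intt(yw)\) stabilises \(K_w\) is exactly what guarantees that \(F\coloneqq\phi\circ\intt(\dot y\dot w)\) restricts to an endomorphism of the reductive group \(H_w\), so that \(\Pi_w=H_w^{F}\) really is a group. This is the step I expect to be the main obstacle: it requires careful bookkeeping of the two different Levi projections \(P\to L\) and \(Q\to M\), of the chosen representatives \(\dot w,\dot y\in\Norm_{G(\FFbar_q)}(T(\FFbar_q))\), and of how \(g_w\) conjugates \(B\), \(P\) and \(Q\) into one another.

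It then remains to show that, for each fixed admissible Levi component, the set of allowed \(u\in\Ru Q\) forms a connected group. One argues that this subgroup scheme of \(\Ru Q\) is an iterated extension of vector groups — filter \(\Ru Q\) by root height, in which coordinates the defining equations become affine-linear on each graded layer — so it is smooth and connected; this is a genuine point in positive characteristic and should be checked by such an explicit computation rather than by char-zero intuition. Hence \(S_g\) is a semidirect product \(U\rtimes\Pi_w\) with \(U\) connected unipotent. Finally \(F=\phi\circ\intt(\dot y\dot w)\) is a Steinberg endomorphism of the connected reductive group \(H_w\), so \(\Pi_w=H_w^{F}\) is finite, hence étale; thus \(U=S_g^{\circ}\) and \(\pi_0(S_g)=\Pi_w\), which is the claim.
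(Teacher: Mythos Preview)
The paper does not prove this theorem; it is stated with attribution to \cite[3.34]{Pink_2015} and then used as a black box in all the examples that follow. There is therefore no proof in the paper to compare your sketch against.

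For what it is worth, your outline does capture the overall strategy one finds in \cite{Pink_2015}: reduce to the scheme-theoretic stabilizer at a chosen orbit representative, separate off a connected unipotent part via a root-height filtration, and identify the reductive quotient with the fixed points of the Steinberg endomorphism \(F=\phi\circ\intt(\dot y\dot w)\) on \(H_w\), so that \(\pi_0\) of the stabilizer is the finite group \(H_w^{F}=\Pi_w\). One caveat on details: the orbit representative in \cite{Pink_2015} is not literally \(\dot y\dot w\); the element \(\dot y\) enters through the frame of the zip datum and the passage between the two Levi projections \(P\to L\) and \(Q\to M\), rather than as the chosen point itself. This does not change the shape of your argument, but if you ever flesh it out you will want to match those conventions precisely.
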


The morphism \(\phi\circ\intt(\dot y\dot w)\) defines a descend datum of \(H_w\) from \(\FFbar_q\) to \(\FF_q\), so \(\Pi_w\) is the set of \(\FF_q\)-valued points of some form of \(H_w\).

\begin{example}[\(P=G\)]
    First we look at the case that \(\mu\) is a central cocharacter, for general \(G\). For example, \(\mu\) can be constant. In this case \(I=S\), hence 
    \[\{\text{orbits}\}={}^IW=\{\id\},\]
    so there is only one orbit. As \(w_0\) always has order 2, we have \(y=w_0^2=\id\), hence \(K_{\id}=S\) and \(H_{\id}=G\), so
    \[\Pi_{\id} = \set{h\in G(\FFbar_q)}{h=\phi(h)} = G(\FF_q),\]
    so in this case the simple perverse sheaves are the irreducible representations of the \(\FF_q\)-valued points of \(G\). These can be studied using Deligne-Lusztig theory.
\end{example}

\begin{example}
    We always have \(w_{0,I}w_0=y\inv\in {}^IW\), which is the longest element of \({}^IW\) and hence corresponds to the open orbit, and at this orbit the group of connected components is 
    \[\Pi_{y\inv} = \set{h\in H_{y\inv}(\FFbar_q)}{h=\phi(h)} = H_{y\inv}(\FF_q).\]
    The type \(K_{y\inv}\) of \(H_{y\inv}\) can be made more explicit. We have 
    \(J\cap {}^{w\inv}I={}^{w_0}\phi(I)\cap {}^{w_0w_{0,I}}I= \phi({}^{w_0}I)\cap {}^{w_0}I\), as \({}^{w_{0,I}}I=I\), and hence
    \[K_{y\inv} = \set{s\in {}^{w_0}I}{\forall i\ge0: \phi^i(s)\in {}^{w_0}I} = \bigcap_{i\ge0}\phi^i({}^{w_0}I).\]
\end{example}

\subsection{Examples for \texorpdfstring{\(\GL_n\)}{GLn}}

We now apply these theorems to the group \(\GL_n\) for several different cocharacters.

\begin{example}[\(\GL_n, Q=B\)]
    Now we take \(G=\GL_n\) and a regular cocharacter, for example \(\mu(t)=\diag(t^n,t^{n-1},\dots,t)\). Then \(Q\) are the upper triangular matrices, \(P\) the lower triangular matrices, hence we must take \(B=Q\) and \(T=L=M\), the diagonal matrices. As the Weyl group we have the permutation group \(\S_n\) with simple reflections given by \((i\;\;i+1)\), and as representatives we always choose the usual permutation matrices, see \cite[21.40]{milne2017algebraic}. Note that multiplication in \(\S_n\) is given by \((\sigma_1\sigma_2)(i)=\sigma_1(\sigma_2(i))\).  The action of the Frobenius on \(W\) is trivial. By the choice of \(\mu\) we have \(I=J=\emptyset\), and hence 
    \[\{\text{orbits}\}={}^IW = W = \S_n.\]
    We have \(K_w=\emptyset\) and hence \(H_w=T\) for all \(w\in W\). We have \(y=w_0=(1\;\;n)(2\;\;n-1)\cdots\), so 
    \begin{align*}
        \Pi_w &= \set{\diag(a_1,\dots,a_n)\in T(\FFbar_q)}{\diag(a_1,\dots,a_n)=\phi\big(\dot w_0\dot w \diag(a_1,\dots,a_n) \dot w\inv\dot w_0\inv\big)}\\
        &= \set{\diag(a_1,\dots,a_n)\in T(\FFbar_q)}{\diag(a_1,\dots,a_n)= \diag(a_{(w_0w)\inv(1)}^q,\dots,a_{(w_0w)\inv(n)}^q)}\\
        &= \bigtimes_{\text{cycles }\sigma\text{ in }w_0w} \set{a\in\FFbar_q^\times}{a=a^{q^{\mathrm{ord}(\sigma)}}}\\
        &= \bigtimes_{\text{cycles }\sigma\text{ in }w_0w} \FF_{q^{\mathrm{ord}(\sigma)}}^{\times}\\
        &= \bigtimes_{\text{cycles }\sigma\text{ in }w_0w} \Res_{\FF_{q^{\mathrm{ord}(\sigma)}}/\FF_q}\Gm(\FF_q),
    \end{align*}
    where the order of a cycle \(\sigma\) is the smallest number \(n>0\) with \(\sigma^n=\id\), and we used that each permutation can be decomposed uniquely (up to reordering) into disjoint cycles with oders adding to \(n\). The third equality comes from the fact that if 1 is part of a cycle of, say, order 3, then \(a_1=a_{(w_0w)\inv(1)}^q=a_{(w_0w)^{-2}(1)}^{q^2}=a_1^{q^3}\).\\
    As these stabilizers are abelian, the number of their irreducible representations equals the order of the group, see \cite[3.2.10, 5.1]{serre1977linear}. For arbitrary finite groups, the number of irreducible representations (over \(\CC\)) equals the number of conjugacy classes, see \cite[2.30]{fultonharris}.
\end{example}

\begin{example}[\(\GL_n\), \((1,n-1)\)]
    Let \(n\ge3\), take \(G=\GL_n\) and consider \(\mu(t)\coloneqq \diag(t,1,\dots,1)\), which corresponds to \(Q=\begin{pNiceMatrix}
* & * & \Cdots & * \\
0 & \Block[borders={top,left, tikz=dashed}]{3-3}{} * & \Cdots & * \\
\Vdots & \Vdots & \Ddots & \Vdots \\
0 & * & \Cdots & *
\end{pNiceMatrix}\). We have \(W=\S_n\) with \(S\) as above, and 
\[I=\{(23),\dots,(n-1\;\;n)\}.\]
We claim 
\[\{\text{orbits}\}={}^IW = \{\id,(12),(123),\dots,(1\cdots n)\}.\]
"\(\supseteq\)": Note that \(W_I\) are precisely those permutations that fix the first element. Given the permutation \((1\cdots k)\) we need to argue that there is no such permutation we can postcompose with to shorten the length. But in our case the length is the number of pairs \(i<j\) with \(w(i)>w(j)\). But after the permutation \((1\cdots k)\), we have \(w\inv(2)<\cdots<w\inv(n)\), so an element in \(W_I\) cannot decrease the length.\\
"\(\subseteq\)": As \({}^IW\) is a system of representatives for the quotient \(W_I\backslash W\), this follows from the inequality
\[n! = \#\mathrm S_n = \#{}^IW \cdot \# W_I \ge \#\{\id,(12),(123),\dots,(1\cdots n)\} \cdot \#\mathrm S_{n-1} = n \cdot (n-1)!=n!,\]
which must therefore be an equality.

Now let \(1\le k\le n\) and set \(w_k\coloneqq (1\cdots k)\). We have 
\begin{align*}
    J &= {}^{w_0}I\\
    &= \{(12),\dots,(n-2\;\;n-1)\}\\[10pt]
    {}^{w_k}I &= \{{}^{w_k}(23),\dots,{}^{w_k}(n-1\;\;n)\}\\
    &= \{(34),\dots,(k-1\;\;k), (k\;\;1), (k+1\;\;k+2),\dots,(n-1\;\;n)\}\\[10pt]
    J\cap {}^{w_k}I &= S\setminus\{(12),(23),(k\;\;k+1),(n-1\;\;n)\}
\end{align*}
We have \(y=w_0 w_{0,I} = (n\;\;n-1\cdots 1)\) and hence \(y w_k = (n\;\;n-1\cdots k)\). Therefore we get 
\[K_{w_k} = \set{(i\;\;i+1)\in S}{3\le i<k-1} = \{(34),\dots,(k-2\;\;k-1)\},\]
and \(H_{w_k}\) looks something like \(\begin{pNiceArray}{ccc@{\;\;\;\;}cc@{\;\;}c}[small]
    * \\
    & * \\
    & & \Block[borders={bottom,top,right,left,tikz=dashed}]{2-2}<\Large>{*} \\
    & \\
    & & & & *\\[5pt]
    & & & & & *
    \CodeAfter \line{5-5}{6-6}
\end{pNiceArray}\). Therefore
\begin{align*}
    \Pi_{w_k} &= \set{h\in H_{w_k}(\FFbar_q)}{h=\phi\big(\dot y \dot w_k h (\dot y \dot w_k)\inv\big)}\\
    &= \set{(a_1,a_2,A,a_k)\in \FFbar_q\times\FFbar_q\times\GL_{k-3}(\FFbar_q)\times \FFbar_q}{a_1^q=a_1, a_2^q=a_2, A^{(q)}=A, a_k^{q^{n-k+1}}=a_k}\\
    &= \FF_q^\times\times\FF_q^\times\times\GL_{k-3}(\FF_q)\times \FF_{q^{n-k+1}}^\times\\
    &= \big(\Gm^2\times \GL_{k-3}\times \Res_{\FF_{q^{n-k+1}}/\FF_q}\Gm\big)(\FF_q).
\end{align*}
\end{example}

\begin{example}[\(\GL_4\), \((2,2)\)]
    Let \(G=\GL_4\) and \(\mu(t)=\diag(t,t,1,1)\), which implies \(I=\{(12),(34)\}\) and \(Q=\begin{psmallmatrix}
        * & * & * & * \\
        * & * & * & * \\
          &   & * & * \\
          &   & * & *
    \end{psmallmatrix}\). One can compute
    \[\{\text{orbits}\}={}^IW = \{\id, (23), (132), (234), (1342), (13)(24)\},\]
    with specialization relations given by 
    \[\begin{tikzcd}
                   &                              & {(132)} \arrow[rd, rightsquigarrow] &                &     \\[-20pt]
    {(13)(24)} \arrow[r, rightsquigarrow] & {(1342)} \arrow[ru, rightsquigarrow] \arrow[rd, rightsquigarrow] &                  & {(23)} \arrow[r, rightsquigarrow] & \id. \\[-20pt]
                   &                              & {(234)} \arrow[ru, rightsquigarrow] &                &    
    \end{tikzcd}\]
    We have \(y=w_0 w_{0,I} = (14)(23)\cdot(12)(34) = (13)(24)\) and \(I=J\). \\
    "\(w=\id\)": \(K_{\id}=I\), \(yw = (13)(24)\) and
    \begin{align*}
        \Pi_{\id} &= \set{\begin{pmatrix}
            A_1 \\
            & A_2
        \end{pmatrix}\in \GL_4(\FFbar_q)}{\begin{pmatrix}
            A_1 \\
            & A_2
        \end{pmatrix} = \begin{pmatrix}
            A_2^{(q)} \\
            & A_1^{(q)}
        \end{pmatrix}}\\
        &= \GL_2(\FF_{q^2}) = \Res_{\FF_{q^2}/\FF_q}\GL_2(\FF_q).
    \intertext{"\(w=(13)(24)\)": \(K_{(13)(24)}=I\), \(yw=\id\) and hence}
    \Pi_{(13)(24)} &= \GL_2(\FF_q)^2
    \intertext{"\(w=(23)\)": \(K_{(23)}=\emptyset\), 
    \(yw=(13)(24)(23)=(1342)\), so} 
    \Pi_{(23)}&=\FF_{q^4}^\times = \Res_{\FF_{q^4}/\FF_q}\Gm(\FF_q).
    \intertext{"\(w=(132)\)": \(K_{(132)}=\emptyset\), 
    \(yw=(13)(24)(132)=(234)\), so}
    \Pi_{(132)}&=\FF_q^\times \times \FF_{q^3}^\times = \big(\Gm \times \Res_{\FF_{q^3}/\FF_q}\Gm\big)(\FF_q).
    \intertext{"\(w=(234)\)": \(K_{(234)}=\emptyset\), 
    \(yw=(13)(24)(234)=(132)\), so} 
    \Pi_{(234)}&=\FF_q^\times \times \FF_{q^3}^\times = \big(\Gm \times \Res_{\FF_{q^3}/\FF_q}\Gm\big)(\FF_q).
    \intertext{"\(w=(1342)\)": \(K_{(1342)}=\emptyset\), 
    \(yw=(13)(24)(1342)=(23)\), so} 
    \Pi_{(1342)}&=(\FF_q^\times)^2 \times \FF_{q^2}^\times = \big(\Gm^2 \times \Res_{\FF_{q^2}/\FF_q}\Gm\big)(\FF_q).
    \end{align*}
\end{example}

\subsection{Symplectic Group}
We now look at the symplectic group \(\Sp_4\), the isometry group of a nondegenerate alternating bilinear form on \(\FF_q^4\), which is unique up to base change. We use results about this group explained in \cite[4.5]{Makisumi2011RedGrpEx}. We take the bilinear form given by 
\[J=\begin{pmatrix}
    & & & 1 \\
    & & 1 \\
    & -1 \\
    -1
\end{pmatrix}\]
and define the group scheme
\[\Sp_4: \Alg{\FF_q}\to \Grp,\quad R\mapsto \set{A\in\GL_4(R)}{A^\top JA=J}.\]
We make this choice of \(M\) because then the standard Borel resp.\ torus of \(\GL_4\), intersected with \(\Sp_4\), will be a Borel \(B\) resp.\ torus \(T\) of \(\Sp_4\). This torus consists of matrices \(t=\diag(t_1,t_2,t_2\inv,t_1\inv)\), and we set \(\alpha_1,\alpha_2:T\to\Gm\) with \(\alpha_i(t)=t_i\) and \(\lambda_1,\lambda_2:\Gm\to T\) with \(\lambda_1(x)=\diag(x,1,1,x\inv)\) and \(\lambda_2(x)=\diag(1,x,x\inv,1)\). They span the \(\ZZ\)-module of (co-)characters. 
    
The Lie algebra consists of matrices \(M\) with \(M^\top J + JM = 0\), i.e.\ 
\(M=\begin{psmallmatrix}
    a & b & c & d\\
    e & f & g & c\\
    i & j & - f & - b\\
    m & i & - e & - a
\end{psmallmatrix}\).
The roots are given by 
\[\Phi = \{\pm2\alpha_1, \pm2\alpha_2, \pm(\alpha_1-\alpha_2), \pm(\alpha_1+\alpha_2)\},\]
due to the computation \(t M t\inv = \begin{psmallmatrix}a & b t_{1} / t_{2} & c t_{1} t_{2} & d t_{1}^{2}\\e t_{2} / t_{1} & f & g t_{2}^{2} & c t_{1} t_{2}\\i / t_{1} t_{2} & j/t_{2}^{2} & - f & - b t_{1} / t_{2}\\ m/t_{1}^{2} & i / t_{1} t_{2} & - e t_{2} / t_{1} & - a\end{psmallmatrix}\), with corresponding unipotent subgroups
\NiceMatrixOptions{cell-space-limits=2pt}
\[\arraycolsep=15pt
\begin{NiceArray}{l|cccc}
\alpha & 2\alpha_1 & 2\alpha_2 & \alpha_1 - \alpha_2 & \alpha_1 + \alpha_2 \\ \hline
U_\alpha &
\begin{psmallmatrix}
1 &   &   & x \\
  & 1 &   &   \\
  &   & 1 &   \\
  &   &   & 1
\end{psmallmatrix} &
\begin{psmallmatrix}
1 &   &   &   \\
  & 1 & x &   \\
  &   & 1 &   \\
  &   &   & 1
\end{psmallmatrix} &
\begin{psmallmatrix}
1 & x &   &   \\
  & 1 &   &   \\
  &   & 1 & x \\
  &   &   & 1
\end{psmallmatrix} &
\begin{psmallmatrix}
1 &   & x &   \\
  & 1 &   & x \\
  &   & 1 &   \\
  &   &   & 1
\end{psmallmatrix} 
\end{NiceArray}\]
and \(U_{-\alpha}=U_\alpha^\top\). 
The cocharacter \(\lambda_\uparrow \coloneqq 2\lambda_1 + \lambda_2\) defines the positive roots
\[\Phi^+ = \set{\alpha\in\Phi}{\langle \alpha, \lambda_\uparrow \rangle >0} = \{2\alpha_1, 2\alpha_2, (\alpha_1-\alpha_2), (\alpha_1+\alpha_2)\}\]
For a cocharacter \(\lambda\) we can define the parabolic subgroup
\[P_\lambda \coloneqq \big\langle T,U_\alpha \,;\, \alpha\in\Phi, \langle \alpha,\lambda\rangle \ge0\big\rangle\]
and get \(P_{\lambda_\uparrow}=B\). The simple roots in \(\Phi^+\) are \(r\coloneqq 2\alpha_2\) and \(s\coloneqq \alpha_1-\alpha_2\). The Weyl group is \(W=\mathrm D_4\), the symmetry group of the square:
\[\begin{array}{l|cccccccc}
    \text{Element} & \id & s & r & rs & sr & rsr & srs & rsrs \\ \hline
    \text{Length} & 0 & 1 & 1 & 2 & 2 & 3 & 3 & 4 \\
    \text{Lift in }\Sp_4 & 
    \scalebox{0.7}{\(\begin{psmallmatrix}
    + &   &   &   \\
      & + &   &   \\
      &   & + &   \\
      &   &   & + \\
    \end{psmallmatrix}\)} &
    \scalebox{0.7}{\(\begin{psmallmatrix}
      & + &   &   \\
    - &   &   &   \\
      &   &   & - \\
      &   & + &   \\
    \end{psmallmatrix}\)} &
    \scalebox{0.7}{\(\begin{psmallmatrix}
    + &   &   &   \\
      &   & + &   \\
      & - &   &   \\
      &   &   & + \\
    \end{psmallmatrix}\)} &
    \scalebox{0.7}{\(\begin{psmallmatrix}
      & + &   &   \\
      &   &   & - \\
    + &   &   &   \\
      &   & + &   \\
    \end{psmallmatrix}\)} &
    \scalebox{0.7}{\(\begin{psmallmatrix}
      &   & + &   \\
    - &   &   &   \\
      &   &   & - \\
      & - &   &   \\
    \end{psmallmatrix}\)} &
    \scalebox{0.7}{\(\begin{psmallmatrix}
      &   & + &   \\
      &   &   & - \\
    + &   &   &   \\
      & - &   &   \\
    \end{psmallmatrix}\)} &
    \scalebox{0.7}{\(\begin{psmallmatrix}
      &   &   & - \\
      & - &   &   \\
      &   & - &   \\
    + &   &   &   \\
    \end{psmallmatrix}\)} &
    \scalebox{0.7}{\(\begin{psmallmatrix}
      &   &   & - \\
      &   & - &   \\
      & + &   &   \\
    + &   &   &   \\
    \end{psmallmatrix}\)} \\[7pt]
    \text{Geometric} & 
    \scalebox{2}{F} &
    \rotatebox[origin=c]{-90}{\reflectbox{\scalebox{2}{F}}} & 
    \rotatebox[origin=c]{180}{\reflectbox{\scalebox{2}{F}}} & 
    \rotatebox[origin=c]{-90}{\scalebox{2}{F}} & 
    \rotatebox[origin=c]{90}{\scalebox{2}{F}} &
    \rotatebox[origin=c]{90}{\reflectbox{\scalebox{2}{F}}} &

    \reflectbox{\scalebox{2}{F}} &  
    \rotatebox[origin=c]{180}{\scalebox{2}{F}}
\end{array}\]
In the matrices \(\pm\) stands for \(\pm1\), and the geometry is in the plane where the \(x\)-axis is \(\alpha_1\) and the \(y\)-axis is \(\alpha_2\).

Now we choose the cocharacter \(\lambda_1+\lambda_2\), which corresponds to the parabolic subgroup
\[P_{\lambda_1+\lambda_2}= \big\langle T,U_\alpha : \alpha\in \{2\alpha_1, 2\alpha_2, \pm(\alpha_1-\alpha_2), (\alpha_1+\alpha_2)\}\big\rangle = \begin{psmallmatrix}
    * & * & * & * \\
    * & * & * & * \\
      &   & * & * \\
      &   & * & *
\end{psmallmatrix}\]
of type \(I=\{s\}= \{\alpha_1-\alpha_2\}\). The Frobenius again acts trivial, so
\[\{\text{orbits}\}={}^IW = \set{w\in W}{\ell(sw)>\ell(w)} = \{\id, r, rs, rsr\}.\]
We have \(y= rsrs\cdot s = rsr\) and \(J={}^{rsrs}\{s\} = \{rsrs\cdot s\cdot srsr\}=\{(rsrs)^2s\}=\{s\}\).

\noindent"\(w=\id\)": \(K_w=\{s\}\) as \(ysy\inv=s\) and 
\begin{align*}
    \Pi_{\id} &= \set{
    \begin{pmatrix} A \\ & B \end{pmatrix} \in \Sp_4(\FFbar_q)}{\begin{pmatrix} A \\ & B \end{pmatrix} = \dot y \begin{pmatrix} A^{(q)} \\ & B^{(q)} \end{pmatrix}\dot y\inv}\\
    &= \set{
    \begin{pmatrix} A \\ & B \end{pmatrix} \in \Sp_4(\FFbar_q)}{\begin{pmatrix} A \\ & B \end{pmatrix} = 
    \begin{psmallmatrix} 
     &   & + &   \\
      &   &   & - \\
    + &   &   &   \\
      & - &   &   \\
    \end{psmallmatrix}
    \begin{pmatrix} A^{(q)} \\ & B^{(q)} \end{pmatrix}
    \begin{psmallmatrix} 
     &   & + &   \\
      &   &   & - \\
    + &   &   &   \\
      & - &   &   \\
    \end{psmallmatrix}}\\
    &= \set{
    \begin{pmatrix} A \\ & B \end{pmatrix} \in \GL_4(\FFbar_q)}{\begin{pmatrix} A \\ & B \end{pmatrix} = 
    \begin{pmatrix} \begin{psmallmatrix}
        1 \\ & -1
    \end{psmallmatrix} B^{(q)} \begin{psmallmatrix}
        1 \\ & -1
    \end{psmallmatrix} \\[5pt] & \hspace{-30pt}\begin{psmallmatrix}
        1 \\ & -1
    \end{psmallmatrix} A^{(q)} \begin{psmallmatrix}
        1 \\ & -1
    \end{psmallmatrix} \end{pmatrix}, 
    B= \begin{psmallmatrix} & 1 \\ 1 \end{psmallmatrix}
    A^{-\top}\begin{psmallmatrix} & 1 \\ 1 \end{psmallmatrix}}\\
    &= \set{A\in \GL_2(\FF_{q^2})}{A = 
    \begin{psmallmatrix}
        & 1 \\ -1
    \end{psmallmatrix} 
    (A^{(q)})^{-\top}
    \begin{psmallmatrix}
        & -1 \\ 1
    \end{psmallmatrix}}\\
    &= \mathrm U_2(\FF_q),
\intertext{where the unitary group is defined as in definition \ref{Def_GU}, but with \(c=1\). This last equality can be seen by taking the hermitian form given by \(M=\alpha\begin{psmallmatrix}
    0 & -1 \\ 1 & 0
\end{psmallmatrix}\), where \(\alpha\in \FFbar_q\) with \(\alpha^q=-\alpha\), hence \(\alpha\in\FF_{q^2}\). Then \(A^\dagger=M\inv(A^{(q)})^{\top}M = \begin{psmallmatrix}
        & 1 \\ -1
    \end{psmallmatrix} 
    (A^{(q)})^{\top}
    \begin{psmallmatrix}
        & -1 \\ 1
    \end{psmallmatrix}\) and hence \(AA^\dagger=\1\).
\newline
"\(w=r\)": We have \(yw=rs\) and hence \(yw\cdot s\cdot (yw)\inv=rs\cdot r\cdot sr=srs\not=s\), so \(K_w=\emptyset\) and}
    \Pi_r &= \set{
    \begin{psmallmatrix}
        t_1 \\ & t_2 \\ & & t_2\inv \\ & & & t_1\inv
    \end{psmallmatrix}
    \in \Sp_4(\FFbar_q)}{
    \begin{psmallmatrix}
        t_1 \\ & t_2 \\ & & t_2\inv \\ & & & t_1\inv
    \end{psmallmatrix}
    = \begin{psmallmatrix}
      & + &   &   \\
      &   &   & - \\
    + &   &   &   \\
      &   & + &   \\
    \end{psmallmatrix}
    \begin{psmallmatrix}
        t_1^q \\ & t_2^q \\ & & t_2^{-q} \\ & & & t_1^{-q}
    \end{psmallmatrix}
    \begin{psmallmatrix}
      & + &   &   \\
      &   &   & - \\
    + &   &   &   \\
      &   & + &   \\
    \end{psmallmatrix}\inv
    }\\
    &= \set{
    \begin{psmallmatrix}
        t_1 \\ & t_2 \\ & & t_2\inv \\ & & & t_1\inv
    \end{psmallmatrix}
    \in \Sp_4(\FFbar_q)}{
    \begin{psmallmatrix}
        t_1 \\ 
        & t_2 \\ 
        & & t_2\inv \\ 
        & & & t_1\inv
    \end{psmallmatrix}
    = 
    \begin{psmallmatrix}
        t_2^q \\ & t_1^{-q} \\ & & t_1^{q} \\ & & & t_2^{-q}
    \end{psmallmatrix}\inv
    }\\
    &= \set{t_1\in \FF_{q^4}^\times}{t_1 = t_1^{-q^2}}\\
    &= K(\FF_q),
\intertext{where \(K\) is the kernel of 
\(\Res_{\FF_{q^4}/\FF_q}\Gm\twoheadrightarrow \Res_{\FF_{q^2}/\FF_q}\Gm,\;x\mapsto x\cdot\sigma^2(x)\).\newline
"\(w=rs\)": We have \(yw=r\) and hence \(yw\cdot s\cdot (yw)\inv=r\cdot rs\cdot r=sr\not=s\), so \(K_w=\emptyset\) and}
    \Pi_{rs} &= \set{
    \begin{psmallmatrix}
        t_1 \\ & t_2 \\ & & t_2\inv \\ & & & t_1\inv
    \end{psmallmatrix}
    \in \Sp_4(\FFbar_q)}{
    \begin{psmallmatrix}
        t_1 \\ & t_2 \\ & & t_2\inv \\ & & & t_1\inv 
    \end{psmallmatrix}= 
    \begin{psmallmatrix}
    + &   &   &   \\
      &   & + &   \\
      & - &   &   \\
      &   &   & + \\
    \end{psmallmatrix}
    \begin{psmallmatrix}
        t_1^q \\ & t_2^q \\ & & t_2^{-q} \\ & & & t_1^{-q} 
    \end{psmallmatrix}
    \begin{psmallmatrix}
    + &   &   &   \\
      &   & + &   \\
      & - &   &   \\
      &   &   & + \\
    \end{psmallmatrix}\inv
    }\\
    &= \set{
    \begin{psmallmatrix}
        t_1 \\ & t_2 \\ & & t_2\inv \\ & & & t_1\inv
    \end{psmallmatrix}
    \in \Sp_4(\FFbar_q)}{
    \begin{psmallmatrix}
        t_1 \\ & t_2 \\ & & t_2\inv \\ & & & t_1\inv 
    \end{psmallmatrix}= 
    \begin{psmallmatrix}
        t_1^q \\ & t_2^{-q} \\ & & t_2^q \\ & & & t_1^{-q} 
    \end{psmallmatrix}
    }\\
    &= \FF_q^\times \times \set{t_2\in\FF_{q^2}^\times}{t_2\inv=t_2^q}\\
    &= \big(\Gm\times K\big)(\FF_q),
\intertext{where \(K\) is the kernel of 
\(\Res_{\FF_{q^2}/\FF_q}\Gm\twoheadrightarrow \Gm,\;x\mapsto x\cdot\sigma(x)\).\newline
"\(w=rsr\)": We have \(yw=\id\) and hence \(yw\cdot s\cdot (yw)\inv=s\), so \(K_w=\{s\}\) and}
    \Pi_{rsr} &= \set{
    \begin{pmatrix} A \\ & B \end{pmatrix} \in \Sp_4(\FFbar_q)}{\begin{pmatrix} A \\ & B \end{pmatrix} = \begin{pmatrix} A^{(q)} \\ & B^{(q)} \end{pmatrix}}\\
    &= \GL_2(\FF_q).
\end{align*}

One can also generalize some of these computations to higher dimensional symplectic groups. For \(n\in\NN\) we define 
\(J_n=\begin{pmatrix}
    & \mathrm{antidiag}_n(1)\\
    \mathrm{antidiag}_n(-1)
\end{pmatrix}\)
and define the group scheme
\[\Sp_{2n}: \Alg{\FF_q}\to \Grp,\quad R\mapsto \set{A\in\GL_{2n}(R)}{A^\top J_nA=J_n}.\]
The Weyl group is described in \cite[A.7]{Viehmann_2013}. We have 
\begin{align*}
    W  &= \set{w\in\S_{2n}}{w(i)+w(2n+1-i)=2n+1\text{ for all }i=1,\dots,n} \\
    S&=\{s_1,\dots,s_n\},\quad\text{where}\quad s_i = \begin{cases}
        (i\;\;i+1)(2n-i\;\;2n-i+1), & 1\le i<n\\
        (n\;\;n+1), & i=n
    \end{cases}
\end{align*}
We look at the case \(I=S\setminus\{s_n\}\), so that the corresponding Levi subgroup consists of two blocks of size \(n\). Then we have
\begin{align*}
    w_0 &= (1\;\;2n)(2\;\;2n-1)\cdots(n\;\;n+1)\\
    w_{0,I} &= (1\;\;n)(2\;\;n-1)\cdots(\floor{\frac n2}\;\;\ceil{\frac n2})\cdot (n+1\;\;2n)(n+2\;\;2n-1)\cdots\\
    y &= w_0 w_{0,I} = (1\;\;n+1)(2\;\;n+2)\cdots(n\;\;2n)\\
    \dot y &= \begin{pmatrix}
        & \1_n \\
        -\1_n
    \end{pmatrix}\\
    J &= {}^{w_0}I = I\\
    \{\text{orbits}\} &= {}^IW = \set{w\in W}{w\inv(1)<w\inv(2)<\dots<w\inv(n)} \cong \{0,1\}^n
\end{align*}
For the elements \(\id,y\in{}^IW\) we can again compute the stabilizer:

\noindent"\(w=\id\)": We have \(yw=y\) and hence \(K_{\id}=I\) and
\begin{align*}
    \Pi_{\id} &= \set{
    \begin{pmatrix}
        A \\ & B
    \end{pmatrix}
    \in \Sp_{2n}(\FFbar_q)}{
    \begin{pmatrix}
        A \\ & B
    \end{pmatrix}
    = \dot y 
    \begin{pmatrix}
        A^{(q)} \\ & B^{(q)}
    \end{pmatrix}
    \dot y\inv
    }\\
    &= \set{
    \begin{pmatrix}
        A \\ & B
    \end{pmatrix}
    \in \Sp_{2n}(\FFbar_q)}{
    \begin{pmatrix}
        A \\ & B
    \end{pmatrix}
    = 
    \begin{pmatrix}
        B^{(q)} \\ & A^{(q)}
    \end{pmatrix}
    , B = 
    \begin{pNiceMatrix}
        & 1 \\ 1 
        \CodeAfter\line{1-2}{2-1}
    \end{pNiceMatrix}
    A^{-\top}
    \begin{pNiceMatrix}
        & 1 \\ 1 
        \CodeAfter\line{1-2}{2-1}
    \end{pNiceMatrix}
    }\\
    &= \set{A\in \GL_n(\FF_{q^2})}{A = 
    \begin{pNiceMatrix}
        & 1 \\ 1 
        \CodeAfter\line{1-2}{2-1}
    \end{pNiceMatrix}
    (A^{(q)})^{-\top}
    \begin{pNiceMatrix}
        & 1 \\ 1 
        \CodeAfter\line{1-2}{2-1}
    \end{pNiceMatrix}
    }\\
    &= \mathrm U_n(\FF_q).
\intertext{(The reason why this computation is easier than for \(n=2\) is because we chose a different representative for \(y\). For \(n=2\) we made this choice so the representatives are multiplicative, whereas here we only ever look at a representative of \(y\).)}
\intertext{"\(w=y\)": We have \(yw=\id\) and \(J\cap{}^{w\inv}I=I\), hence \(K_y=I\) and}
\Pi_y &= \set{
    \begin{pmatrix}
        A \\ & B
    \end{pmatrix}
    \in \Sp_{2n}(\FFbar_q)}{
    \begin{pmatrix}
        A \\ & B
    \end{pmatrix}
    =
    \begin{pmatrix}
        A^{(q)} \\ & B^{(q)}
    \end{pmatrix}
    }\\
    &= \GL_n(\FF_q).
\end{align*}

\subsection{Unitary Group with Signature \texorpdfstring{\((1,n-1)\)}{(1,n-1)}}
Let \(q\) be a power of a prime \(p\not=2\). Unitary groups are defined through hermitian vector spaces, so lets first understand these. We consider the field extension \(\FF_{q^2}/\FF_q\) and the non-trivial element of the Galois group \((\phi:x\mapsto \overline x = x^q)\in \Gal(\FF_{q^2}/\FF_q)\). Let \(V\) be a \(\FF_{q^2}\)-vector space and \(\skalp{\cdot}{\cdot}:V\times V\to \FF_{q^2}\) be a non-degenerate hermitian form, i.e.\ bi-additive and
\[\skalp{\alpha v}{w} = \alpha\skalp{v}{w} = \skalp{v}{\overline\alpha w},\quad \skalp{v}{w}=\overline{\skalp{w}{v}} \quad\text{for all }v,w\in V\text{ and }\alpha\in \FF_{q^2},\]
\[\forall v\in V\,\exists w\in V: \skalp{v}{w}\not=0.\]
We call 2 such forms \(\skalp[1]{\cdot}{\cdot}\) and \(\skalp[2]{\cdot}{\cdot}\) isomorphic, if there exists \(A\in \GL(V)\) with \(\skalp[1]{v}{w}=\skalp[2]{Av}{Aw}\) for all \(v,w\in V\). We want to show that all hermitian forms on \(V\) are isomorphic, and for this we need the following

\begin{lemma}
\label{image_q+1_power}
The multiplicative map \(f:\FF_{q^2}\to \FF_{q^2},\, x\mapsto x^{q+1}\) satisfies \(\im f=\FF_q\).
\end{lemma}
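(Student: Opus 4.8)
The plan is to recognise $f$ as essentially the norm map $N_{\FF_{q^2}/\FF_q}\colon x\mapsto x\cdot x^q$ and to analyse it on the multiplicative group, where the statement becomes an elementary fact about cyclic groups.

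First I would check that the image is contained in $\FF_q$. Since $f(0)=0\in\FF_q$, this amounts to showing $x^{q+1}\in\FF_q$ for $x\in\FF_{q^2}^\times$, which follows because $(x^{q+1})^q = x^{q^2+q}=x^{q^2}x^q=x\cdot x^q=x^{q+1}$, using that every element of $\FF_{q^2}$ satisfies $x^{q^2}=x$; as $f$ is multiplicative, $x^{q+1}\neq 0$, so $f$ restricts to a group homomorphism $\FF_{q^2}^\times\to\FF_q^\times$.

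Then I would count. The group $\FF_{q^2}^\times$ is cyclic of order $q^2-1=(q-1)(q+1)$, so the kernel of $f$ on it, namely the group of $(q+1)$-st roots of unity, has exactly $\gcd(q+1,q^2-1)=q+1$ elements, since $q+1$ divides $q^2-1$. Hence the image of $f\colon\FF_{q^2}^\times\to\FF_q^\times$ has order $(q^2-1)/(q+1)=q-1=\#\FF_q^\times$, and since $\FF_q^\times$ has a unique subgroup of order $q-1$ — itself — we get that $f$ is surjective onto $\FF_q^\times$. Adjoining $f(0)=0$ yields $\im f=\FF_q$. Alternatively, one could pick a generator $g$ of $\FF_{q^2}^\times$ and simply note that $f(\langle g\rangle)=\langle g^{q+1}\rangle$ has order $(q^2-1)/\gcd(q+1,q^2-1)=q-1$.

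There is essentially no obstacle here; the only point requiring attention is the bookkeeping with cyclic group orders, in particular the elementary observation $q+1\mid q^2-1$, which is what makes the kernel count clean. I expect the final proof to take no more than a short paragraph.
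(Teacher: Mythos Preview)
Your proof is correct and follows essentially the same route as the paper: both show \(\im f\subseteq\FF_q\) via Galois invariance and then use the cyclic structure of \(\FF_{q^2}^\times\) together with the factorisation \(q^2-1=(q-1)(q+1)\) to obtain surjectivity onto \(\FF_q^\times\). Your alternative formulation --- picking a generator \(g\) and noting \(\langle g^{q+1}\rangle\) has order \(q-1\) --- is precisely the paper's argument, just rephrased additively in \(\ZZ/(q^2-1)\).
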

\begin{proof}
\(\supseteq\): For \(x\in \FF_{q^2}\) we have \(f(x)^q = x^{q^2+q} = x^{1+q} = f(x)\), hence \(f(x)\in\FF_q\).

\(\subseteq\): 
The cyclic group \(\FF_{q^2}^\times \cong \ZZ/(q^2-1)\) has a unique subgroup of order \(q-1\), namely \(\{q+1,2q+2,\dots\}\), and this then has to be the group \(\FF_q^\times\). The map \(f\) translates on \(\ZZ/(q^2-1)\) to multiplication with \(q+1\), simply because multiplication becomes addition. But from the explicit description it is completely clear that in \(\FF_q^\times=\{q+1,2q+2,\dots\}\subset \FF_{q^2}^\times\) every element is a multiple of \(q+1\).
\end{proof}

\begin{lemma}
All hermitian forms on the \(\FF_{q^2}\)-vector space \(V\) are isomorphic.
\end{lemma}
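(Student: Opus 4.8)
The plan is to classify non-degenerate hermitian forms on the fixed $\FF_{q^2}$-vector space $V$ by diagonalizing an arbitrary one and then rescaling the diagonal entries to $1$, the rescaling being possible precisely because of Lemma~\ref{image_q+1_power}. Concretely it suffices to show that every non-degenerate hermitian form on $V$ is isomorphic to the standard one with Gram matrix $\1_n$, where $n=\dim_{\FF_{q^2}}V$; since "isomorphic" is an equivalence relation, this gives the claim.

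First I would produce an orthogonal basis. The only step requiring an argument is the existence of an \emph{anisotropic} vector, i.e.\ some $v$ with $\skalp{v}{v}\neq 0$. If instead $\skalp{u}{u}=0$ for all $u\in V$, then expanding $\skalp{v+\alpha w}{v+\alpha w}=0$ and using $\skalp{v}{\alpha w}=\overline{\skalp{\alpha w}{v}}$ and $\skalp{\alpha w}{v}=\alpha\skalp{w}{v}$ yields $\mathrm{Tr}_{\FF_{q^2}/\FF_q}(\alpha\skalp{w}{v})=0$ for every $\alpha\in\FF_{q^2}$; since the trace pairing of $\FF_{q^2}/\FF_q$ is non-degenerate this forces $\skalp{w}{v}=0$ for all $v,w\in V$, contradicting non-degeneracy. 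Given an anisotropic $v_1$, note $\skalp{v_1}{v_1}=\overline{\skalp{v_1}{v_1}}$ lies in $\FF_q^\times$; the restriction of the form to $\FF_{q^2}v_1$ is non-degenerate, hence $V=\FF_{q^2}v_1\perp v_1^\perp$ with the induced form on $v_1^\perp$ again non-degenerate hermitian, and an induction on $\dim V$ delivers an orthogonal basis $e_1,\dots,e_n$ with $d_i\coloneqq\skalp{e_i}{e_i}\in\FF_q^\times$.

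Second, I would normalize. Replacing $e_i$ by $\lambda_i e_i$ with $\lambda_i\in\FF_{q^2}^\times$ multiplies $d_i$ by $\lambda_i\overline{\lambda_i}=\lambda_i^{q+1}$. By Lemma~\ref{image_q+1_power} the map $x\mapsto x^{q+1}$ has image $\FF_q$, so it restricts to a surjection $\FF_{q^2}^\times\to\FF_q^\times$, and we may choose $\lambda_i$ with $\lambda_i^{q+1}=d_i^{-1}$. In the resulting orthogonal basis every $\skalp{e_i}{e_i}=1$, so the form is $(x,y)\mapsto\sum_i x_i\overline{y_i}$ in these coordinates, i.e.\ it is carried to the standard hermitian form by the change-of-basis element of $\GL(V)$. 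Hence any two non-degenerate hermitian forms on $V$ are isomorphic.

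There is essentially nothing hard here: the two genuine inputs are the existence of an anisotropic vector (a short polarization argument resting on non-degeneracy of the trace form of $\FF_{q^2}/\FF_q$) and the surjectivity of the norm map supplied by Lemma~\ref{image_q+1_power}. The only point to watch is keeping the sesquilinearity convention straight when passing to $v_1^\perp$ and when rescaling basis vectors.
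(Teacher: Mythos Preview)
Your proof is correct and follows essentially the same route as the paper's: first produce an orthogonal basis by repeatedly splitting off an anisotropic line (the paper does this by an explicit iteration, you by induction), then normalize each diagonal entry using the surjectivity of the norm from Lemma~\ref{image_q+1_power}. The only cosmetic difference is in the anisotropic-vector step: you invoke non-degeneracy of the trace pairing $\mathrm{Tr}_{\FF_{q^2}/\FF_q}$, while the paper argues directly that $\skalp{v}{w}=-\overline{\skalp{v}{w}}$ together with sesquilinearity forces $(x-\overline x)\skalp{v}{w}=0$ for all $x$; these are the same argument in different clothing.
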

\begin{proof}
Let \(\skalp{\cdot}{\cdot}\) be a hermitian form on \(V\) and \(e_1,\dots,e_n\) a basis of \(V\). Then the hermitian form is uniquely determined by the matrix \(M=\big(\skalp{e_i}{e_j}\big)_{1\le i,j\le n}\), with \(\skalp{e_i}{e_j} = \overline{\skalp{e_j}{e_i}}\), hence \(\overline M^\top = M\).
We want to transform this hermitian form to the "standard scalar product", i.e.\ find a matrix \(A\in\GL(V)\) such that \(\skalp{Ae_i}{Ae_j}=\delta_{ij}\). 

We proceed in two steps, namely first diagonalize, and then create 1's everywhere with lemma \ref{image_q+1_power}. First we argue that there exists \(v\in V\) with \(\skalp{v}{v}\not=0\). If we had \(\skalp{v}{v}=0\) for all \(v\in V\), then for all \(v,w\in V\) we have
\[0=\skalp{v+w}{v+w}=\skalp{v}{v}+\skalp{w}{w}+\skalp{v}{w}+\skalp{w}{v}=\skalp{v}{w}+\overline{\skalp{v}{w}}\quad\implies\quad \skalp{v}{w}=-\overline{\skalp{v}{w}}.\]
Now we get for all \(x\in \FF_{q^2}\) 
\[x\skalp{v}{w} = \skalp{xv}{w} = -\overline{\skalp{xv}{w}} = -\overline x \overline{\skalp{v}{w}} = \overline x \skalp{v}{w},\]
but for \(x\not\in\FF_q\) we have \(x\not=\overline x\) and hence \(\skalp{v}{w}=0\), contradiction.

Hence we can choose \(f_1\in V\) with \(\skalp{f_1}{f_1}\not=0\). Now we define \(V_1\coloneqq \FF_{q^2}\cdot f_1\) and look at 
\[V_1^\perp\coloneqq \set{v\in V}{\skalp{v}{f_1}=0}.\]
We observe \(V_1\oplus V_1^\perp\), as for \(v\in V\) and \(x\in \FF_{q^2}\) we have 
\[\skalp{v-xf_1}{f_1}=0 \Leftrightarrow \skalp{v}{f_1} = x \skalp{f_1}{f_1} \Leftrightarrow x = \frac{\skalp{v}{f_1}}{\skalp{f_1}{f_1}},\]
and then \(v = xf_1 + (v-xf_1)\) is the desired unique decomposition. Now we define \(f_2\) in the same way we defined \(f_1\), but with \(V_1^\perp\) replacing \(V\). For this it is crucial that \(\skalp{\cdot}{\cdot}\) remains non-degenerate on \(V_1^\perp\), but that's no problem, since if we take \(v\in V_1^\perp\) and find \(w\in V\) with \(\skalp{v}{w}\not=0\), then we write \(w=w_1+w_1^\perp\) and observe \(\skalp{v}{w_1^\perp} = \skalp{v}{w}\not=0\), as \(\skalp{v}{w_1}=0\). We also observe \(\skalp{f_1}{f_2}=0\), as they come from complementary subspaces. We iterate this process until we have \(f_n\). (Possible confusion: Why do we have \(\skalp{f_1}{f_3}=0\)? That's because \(f_3\) is an element of \(V_1^\perp\), just like \(f_2\).) 

All in all we found \(f_1,\dots,f_n\) with \(\skalp{f_i}{f_i}\not=0\) for all \(i\), and \(\skalp{f_i}{f_j}=0\) for \(i\not=j\). Furthermore we observe \(\skalp{f_i}{f_i}=\overline{\skalp{f_i}{f_i}}\) by definition of the hermitian form, and hence \(\skalp{f_i}{f_i}\in \FF_q\). Now we want to scale the elements \(\skalp{f_i}{f_i}\) to \(1\). For this we take \(x_i\in \FF_{q^2}\) and look at \(f_i'=x_i f_i\). We have 
\[\skalp{f_i'}{f_i'}=\skalp{x_i f_i}{x_i f_i} = x_i\cdot \overline x_i \skalp{f_i}{f_i} = x_i^{q+1} \skalp{f_i}{f_i} \overset{!}{=}1.\]
But lemma \ref{image_q+1_power} precisely allows us to choose such \(x_i\). Then the \(f_i'\) indeed satisfy \(\skalp{f_i'}{f_j'}=\delta_{ij}\), and if we define \(A e_i\coloneqq f_i'\), we see that our hermitian form is isomorphic to the "standard scalar product", as
\[\skalp{Ae_i}{Ae_j} = \delta_{ij} = \skalp[\mathrm{standard}]{e_i}{e_j}.\qedhere\]
\end{proof}
One can also show this lemma using the fact that \(\mathrm H^1(\FF_q,\mathrm U)=1\), where \(\mathrm U\) is some unitary group.

When doing computations with the unitary groups, we want that our tori and Borel subgroups are easy to write down, and hence we fix a basis \(e_1,\dots,e_n\) of \(V\), such that the corresponding matrix \(M\) has 1's on the anti-diagonal, and 0's else. In other words, \(\skalp{e_i}{e_j}=\delta_{(n+1-i),j}\). We extend this form with
\[\skalp{\cdot}{\cdot}: V_R\times V_R\to \FF_{q^2}\tensor{\FF_q}R,\quad \skalp{v\otimes r}{w\otimes s} = \skalp{v}{w}\otimes rs,\]
where \(R\) is an \(\FF_q\)-algebra and \(V_R=V\otimes_{\FF_q}\FF_{q^2}\).

\begin{definition}
\label{Def_GU}
    The group of unitary similitudes \(\GU(V)\) is the algebraic group over \(\FF_q\) with
\begin{align*}
    \GU(V)(R) &= \set{g\in \GL(V_R)}{\exists\, c(g)\in R^\times\, \forall\, v,w\in V_R: \skalp{gv}{gw} = c(g)\skalp{v}{w}}\\
    &= \set{g\in \GL_n(\FF_{q^2}\otimes_{\FF_q} R)}{\exists\, c(g)\in R^\times: g^\dagger g = c(g)\1_n},
\end{align*}
where for a matrix \(g\in \GL_n(\FF_{q^2}\otimes_{\FF_q} R)\) we set \((g^\dagger)_{ij}=\overline g_{n+1-j,n+1-i}\) (with conjugation acting trivial on \(R\)). 
\end{definition}
 
If we set \(\dot w_0\) to be the matrix with 1's on the antidiagonal, we have \(g^\dagger = \dot w_0 \overline g^\top \dot w_0\).

\begin{proposition}
    The group \(\GU(V)\) is a form of \(\GL_n\times \Gm\), more precisely 
    \[\GU(V)_{\FF_{q^2}}=\GL_{n,\FF_{q^2}}\underset{\FF_{q^2}}{\times}\Gm[\FF_{q^2}].\]
\end{proposition}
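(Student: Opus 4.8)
\noindent\emph{Proof proposal.} The plan is to invoke the classical principle that a unitary similitude group becomes split after base change to the quadratic \'etale extension defining it, the only real work being to track what this does to the involution $\dagger$. Concretely, I would first fix the $\FF_{q^2}$-algebra isomorphism $\FF_{q^2}\otimes_{\FF_q}\FF_{q^2}\isom\FF_{q^2}\times\FF_{q^2}$, $a\otimes b\mapsto(ab,\overline{a}b)$. Extending scalars from $\FF_{q^2}$ (via the second tensor factor) to an arbitrary $\FF_{q^2}$-algebra $R$ yields a natural isomorphism $\FF_{q^2}\otimes_{\FF_q}R\isom R\times R$ under which the $\FF_q$-linear conjugation $x\mapsto\overline{x}$ becomes the coordinate swap $(r,s)\mapsto(s,r)$, and under which the structure map $R\to\FF_{q^2}\otimes_{\FF_q}R$ becomes the diagonal $r\mapsto(r,r)$.

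For such $R$ this identifies $\GL_n(\FF_{q^2}\otimes_{\FF_q}R)$ with $\GL_n(R)\times\GL_n(R)$; write $g=(g_1,g_2)$. Since $g^\dagger=\dot w_0\,\overline{g}^\top\,\dot w_0$ and conjugation is the swap, we get $g^\dagger=\big(\dot w_0\,g_2^\top\,\dot w_0,\ \dot w_0\,g_1^\top\,\dot w_0\big)$, while a similitude factor $c=c(g)\in R^\times$ corresponds to the diagonal element $(c,c)$. Hence the defining condition $g^\dagger g=c\1_n$ unwinds to the pair of matrix equations
\[\dot w_0\,g_2^\top\,\dot w_0\,g_1=c\1_n,\qquad \dot w_0\,g_1^\top\,\dot w_0\,g_2=c\1_n;\]
the second is exactly the image of the first under the coordinate swap $1\leftrightarrow 2$ (applied to both sides), hence equivalent to it. Using $\dot w_0^\top=\dot w_0$ and $\dot w_0^2=\1_n$, the first equation is in turn equivalent to $g_2=c\,\dot w_0\,(g_1^\top)\inv\,\dot w_0$, so $g$ is freely parametrised by $(g_1,c)\in\GL_n(R)\times R^\times$.

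It follows that the morphism of $\FF_{q^2}$-group schemes $\GU(V)_{\FF_{q^2}}\to\GL_{n,\FF_{q^2}}\times_{\FF_{q^2}}\Gm[\FF_{q^2}]$ given by $g\mapsto(g_1,c(g))$ is a bijection on $R$-points, functorially in $R$, with inverse $(h,c)\mapsto\big((h,\ c\,\dot w_0\,(h^\top)\inv\,\dot w_0),\ c\big)$. Both directions are given by polynomial formulas (the assignment $g\mapsto g_1$ is induced by an $\FF_{q^2}$-algebra homomorphism, and $c=(g^\dagger g)_{11}$ is a regular, invertible function of $g$), so this is an isomorphism of schemes, and it is a group homomorphism because multiplication in $\GL_n(\FF_{q^2}\otimes_{\FF_q}R)$ is componentwise and $c(gh)=c(g)c(h)$.

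I expect the only friction to be bookkeeping rather than anything conceptual: one must pin down the splitting of $\FF_{q^2}\otimes_{\FF_q}\FF_{q^2}$ so that $x\mapsto\overline{x}$ becomes precisely the swap, and then keep the transposes and the antidiagonal matrix $\dot w_0$ straight while checking that the second of the two matrix equations is genuinely redundant. Once the conventions are fixed there is no further input — the splitting of the \'etale algebra is the only nontrivial ingredient, and no property of the hermitian form beyond nondegeneracy (already used to write $\GU(V)$ in matrix form) is needed.
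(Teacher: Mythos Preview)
Your proposal is correct and follows essentially the same approach as the paper: split $\FF_{q^2}\otimes_{\FF_q}R\cong R\times R$ for $\FF_{q^2}$-algebras $R$, observe that conjugation becomes the swap, write $g=(g_1,g_2)$ (the paper calls these $A,B$), and read off that the defining condition determines $g_2$ from $(g_1,c)$. The one step you handle slightly loosely is the claim that the two component equations are equivalent ``because the second is the image of the first under the swap''; swap-symmetry alone does not give this, but the equivalence does hold by the left-inverse/right-inverse trick plus a transpose, and the paper is equally terse at this point.
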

\begin{proof}
For this we look at the case where \(R\) is even an \(\FF_{q^2}\)-algebra. In this case we have
\begin{align*}
    V_R = \FF_{q^2}^n\otimes_{\FF_q} R &= (R\times R)^n\\
    (x_i)\otimes r &\mapsto (x_i r, \overline x_i r),
\end{align*}
induced by the isomorphism
\[\FF_{q^2}\otimes_{\FF_q}\FF_{q^2} \to \FF_{q^2}\times \FF_{q^2},\quad \lambda\otimes \mu \mapsto (\lambda\mu,\overline\lambda \mu).\]
The crucial point now is that the conjugation \(\FF_{q^2}\otimes_{\FF_q}R\ni x\otimes r\mapsto \overline x\otimes r\in\FF_{q^2}\otimes_{\FF_q}R\) translates to a swap of coordinates \(R\times R \ni (r,s)\mapsto (s,r)\in R\times R\).

If we now take a matrix \(g\in \GU(V)(R)\) with entries \(g_{ij}=(a_{ij},b_{ij})\in R\times R\), we have \((g^\dagger)_{ij}=(b_{n+1-j,n+1-i},a_{n+1-j,n+1-i})\), and the condition \(g^\dagger g = c(g)\1_n\) translates to \(B' A=c(g)\) (with \(B' = \dot w_0 B^\top \dot w_0\)) and \(A' B=c(g)\). But the last two equations are equivalent and we observe \(B=c(g)(A')\inv\), so it is enough to remember \(A\) and \(c(g)\). In other words, we have a group isomorphism
\begin{align*}
    \GU(V)(R) &\xrightarrow{\sim} \GL_n(R) \times R^\times\\
    g=(a_{ij},b_{ij}) &\mapsto \big((a_{ij}), c(g)\big).
\end{align*}
This is functorial in \(R\), hence we get an isomorphism of algebraic groups
\[\Psi:\GU(V)_{\FF_{q^2}} \cong \GL_{n,\FF_{q^2}} \underset{\FF_{q^2}}{\times} \Gm[\FF_{q^2}].\qedhere\]
\end{proof}

So let's give a torus and a Borel subgroup of \(\GU(V)\). We define
\begin{align*}
    T(R) &= \set{g\in \GU(V)(R)}{g \text{ is a diagonal matrix}}\\
    B(R) &= \set{g\in \GU(V)(R)}{g \text{ is an upper triangular matrix}}
\end{align*}
One hint that these might define a maximal torus and a Borel, is that the dagger-operation fixes these subgroups, i.e.\ \(T(R)^\dagger\subseteq T(R)\) and \(B(R)^\dagger\subseteq B(R)\): Transposition turns a upper triangular matrix into a lower triangular matrix, inversion fixes lower triangular matrices, and conjugating a lower triangular matrix by \(\dot w_0\) turns it back to a upper triangular matrix. When doing the base change to \(\FF_{q^2}\), then \(T\) and \(B\) just become the usual diagonal torus and upper triangular Borel (times \(\Gm\)) inside \(\GL_n \times \Gm\) via \(\Psi\), so they are indeed a torus and Borel in \(\GU(V)\).

As the Weyl group is defined after base change to the algebraic closure (see \cite{Pink_2015}, \paragraphsymbol 3.5), the Weyl group of \(\GU(V)\) is the same as for \(\GL_n \times \Gm\), namely \(\S_n\). The difference now is that the \(q\)-Frobenius acts non-trivial on the Weyl group. To see this, we first investigate how the Frobenius acts on our group. For an \(\FF_q\)-algebra \(R\) we have the Frobenius \(\phi:R\to R,\, x\mapsto x^q\). For a group \(G\) over \(\FF_q\), the Frobenius is defined on \(R\)-valued points as 
\[\phi: G(R) \to G(R),\quad (\Spec R \to G) \mapsto (\Spec R \xrightarrow{\Spec(\phi)}\Spec R \to G).\]
This definition yields that the Frobenius on \(\GL_n(R)\) just raises all the matrix entries to the \(q\)-th power. So now let's look at \(\GU(V)\). Using the equation
\[\GU(V)(R)=\set{g\in \GL_n(\FF_{q^2}\otimes_{\FF_q} R)}{\exists c(g)\in R^\times: g^\dagger g = c(g)\1_n},\]
we see that the Frobenius is given on the entries of the matrix via the map 
\[\FF_{q^2}\otimes_{\FF_q} R \to \FF_{q^2}\otimes_{\FF_q} R,\qquad x\otimes r \mapsto x\otimes r^q.\]
(Indeed, if \(f:R\to S\) is any morphism of \(\FF_q\)-algebras, then the map \(\GU(V)(R)\to\GU(V)(S)\) is given on entries by \(\id\otimes f\), and \(c(g)\) changes to \(f(c(g)\).) As \(\FFbar_q\) is an \(\FF_{q^2}\)-algebra, we recall the isomorphism \(\FF_{q^2}\otimes_{\FF_q}\FFbar_q \to \FFbar_q\times\FFbar_q,\, x\otimes r\mapsto (xr,\overline x r)\), and see that on \(\GU(V)(\FFbar_q)\) the Frobenius acts entrywise as \(\FFbar_q\times\FFbar_q \ni (a_{ij},b_{ij}) \mapsto (b_{ij}^q,a_{ij}^q)\in\FFbar_q\times\FFbar_q\). Reason:
\[\begin{tikzcd}
    x\otimes r \ar[rrr,mapsto] \ar[ddd,mapsto] &[-20pt] &[20pt] &[-20pt] x\otimes r^q \ar[dd,mapsto] \\[-15pt]
    & \FF_{q^2}\otimes_{\FF_q}\FFbar_q \rar["\phi"] \dar["{\rotatebox[origin=c]{90}{\(\sim\)}}"'] & \FF_{q^2}\otimes_{\FF_q}\FFbar_q \dar["{\rotatebox[origin=c]{90}{\(\sim\)}}"] \\
    & \FFbar_q\times\FFbar_q \rar["{(a,b)\mapsto (b^q,a^q)}"] & \FFbar_q\times\FFbar_q & (x r^q, \overline x r^q)\\[-15pt]
    (xr,\overline x r) \ar[rr,mapsto] & & (\overline x^q r^q, x^q r^q)
\end{tikzcd}\]
Combining this with the formula \(B=c(g)(A')\inv = c(g) \dot w_0 A^{-\top} \dot w_0\) above, we see that the diagram
\[\begin{tikzcd}
    \GU(V)(\FFbar_q) \rar["\phi"] \dar["\Psi"'] & \GU(V)(\FFbar_q) \dar["\Psi"]\\
    \GL_n(\FFbar_q)\times \FFbar_q^\times \rar["\phi"] & \GL_n(\FFbar_q)\times \FFbar_q^\times\\[-20pt]
    \big(A,c(g)\big) \rar[mapsto] & \big( B^{(q)} , c(g)^q \big) = \big( c(g)^q \dot w_0 (A^{(q)})^{-\top} \dot w_0 , c(g)^q \big)
\end{tikzcd}\]
commutes. (Reminder: \((A^{(q)})_{ij} = a_{ij}^q\).) So now it is doable to state the action of \(\phi\) on the Weyl group \(\S_n\) of \(\GU(V)\). For an element \(w\in\S_n\) we denote its usual permutation matrix by \(\dot w\). Note that this explains the mysterious notation \(\dot w_0\). Then an easy set of representatives is given by
\[\S_n \ni w \longmapsto (\dot w,1)\in \GL_n(\FFbar_q)\times\FFbar_q^\times \cong \GU(V)(\FFbar_q).\]
As permutation matrices are orthogonal, we have \(\dot w^{-\top} = \dot w\). As \(\dot w_0\) is the permutation matrix of the longest element in \(\S_n\), namely \(w_0=\begin{psmallmatrix}1&2&\cdots&n \\ n & n-1 & \cdots & 1\end{psmallmatrix}\), we see that \(\phi\) induces the map 
\[\phi: \S_n \to \S_n, \quad w \mapsto w_0 \; w\; w_0.\]
In particular, on simple reflections \(S=\{(12),(23),\dots,(n-1\;\; n)\}\) the Frobenius acts by interchanging \((12)\) with \((n-1\;\; n)\) and so on. This can also be seen more abstractly: The map \(\phi\) has to be nontrivial since otherwise \(\GU(V) \cong \GL_n \times \Gm\), hence it is of this form, since this is the unique nontrivial automorphism of the Coxeter system \((\S_n,S)\).

When looking at \(G\)-zips, then there is the additional data of a cocharacter over a finite extension of \(\FF_q\). As we are interested in signature \((1,n-1)\), we take the cocharacter
\[\mu: \Gm[\FF_{q^2}] \to \GU(V)_{\FF_{q^2}} \cong \GL_{n,\FF_{q^2}} \times \Gm[\FF_{q^2}],\quad t\mapsto 
\left[\begin{pNiceMatrix}[small]
t \\ & 1 \\ & & \Ddots[shorten=6pt] \\ & & & 1
\end{pNiceMatrix},1\right].\]
The corresponding subset of simple reflections is \(I=\{(23),(34),\dots,(n-1\;\;n)\}\). The topological space of \(\GU(V)-\mathrm{Zip}_{\FF_{q^2}}^\mu\) is given by \(\Gamma \backslash \prescript{I}{}{W}\), where \(\Gamma=\Gal(\FFbar_q / \FF_{q^2})\). But \(\Gamma\) is generated by the \(q^2\)-Frobenius, and this operates trivial on \(W\). Hence the topological space is just \(\prescript{I}{}{W}\), and we have 
\[\prescript{I}{}{W} = \{\id,(12),(123),\dots,(1\cdots n)\}.\]
The closed subsets of \(\prescript{I}{}{W}\) are precisely those of the form \(\{\id,\dots, (1\cdots k)\}\) for \(0\le k\le n\).

Now we compute the connected components of the stabilizers. 
\begin{align*}
    I &= \{(23),(34),\dots,(n-1\;\;n)\} \\
    J &= {}^{w_0}\phi(I) = I \\
    w_{0,I} &= \max(W_I) = \begin{psmallmatrix}1&2&\cdots& n \\ 1 & n & \cdots & 2\end{psmallmatrix} \\
    y &= w_0 w_{0,I} = (1\;\; n \;\; n-1\; \cdots\; 3\;\; 2)
\end{align*}
We fix \(1\le k\le n\) and consider \(w=(1\cdots k)\in\prescript{I}{}{W}\). We have
\begin{align*}
    J\cap {}^{w\inv}I 
    &= \set{\sigma\in I}{{}^w\sigma\in I}\\
    &= I \setminus \{(k-1\;\;k),(k\;\;k+1)\}\\
    &= \set{\sigma\in I}{\sigma(k)=k}.
\end{align*}
Now \(K_w\) is the largest subset of \(J\cap {}^{w\inv}I\) stable under \(\phi\circ \intt(yw) = \intt(w_{0,I}w)\). Hence the key is computing \(w_{0,I}w\). We know \(w_{0,I}(1)=1\) and \(w_{0,I}(x)=n+2-x\) for \(x\not=1\). So we have
\[\begin{array}{c|c|c|c|c|c|c|c|c}
    x & 1 & 2 & \cdots & k-1 & k & k+1 & \cdots & n \\
    w(x) & 2 & 3 & \cdots & k & 1 & k+1 & \cdots & n \\
    w_{0,I}w(x) & n & n-1 & \cdots & n+2-k & 1 & n+1-k & \cdots & 2.
\end{array}\]
In particular we witness that \(1\) is mapped to \(n\), which maps to \(2\), back and forth, until \(k\) maps back to \(1\). Hence \(w_{0,I}w\) contains the cycle \((1\;\;n\;\;2\cdots k)\). 
(For \(k=1\) this is the identity, as \(k\) always maps to \(1\), even in the edge cases.) 
So what happens to the rest of the elements? For this we distinguish between \(k\le \frac{n+1}{2}\) and \(k>\frac{n+1}{2}\). In the first case the cycle \((1\;\;n\;\;2\cdots k)\) contains the last \(k-1\) elements, and hence the elements \(M(n,k)\coloneqq\{k+1,\dots,n+1-k\}\) elements are left. (If \(k=\frac{n+1}{2}\), this set is empty.) But \(w\) does not change these elements, and \(w_{0,I}\) exchanges the first and last element of this set, the second with the second to last, and so on. 

Now consider \(k>\frac{n+1}{2}\). We again have the cycle \((1\;\;n\;\;2\cdots k)\), but this now contains the first and last \(n+1-k\) elements. Hence we are left with the elements \(M(n,k)\coloneqq\{n+2-k ,\dots,k-1\}\). (If \(k=\frac{n+2}{2}\), this set is empty.) And again, one observes that \(w_{0,I}w\) flips these elements from front to back. All in all:

For \(k\le\frac{n+1}{2}\): \(w_{0,I}w\) has one cycle containing the first \(k\) and last \(k-1\) elements, and the other \(f(n,k)\coloneqq n+1-2k\) elements are flipped front to back.

For \(k>\frac{n+1}{2}\): \(w_{0,I}w\) has one cycle containing the first and last \(n+1-k\) elements, and the other \(f(n,k)\coloneqq 2k-n-2\) elements are flipped front to back.

Now, for an element \((i\;\;i+1)\in I\) to be in \(K_w\), both \(i\) and \(i+1\) must lie in \(M(n,k)\), because otherwise conjugating with a power of \(w_{0,I}w\) makes this into a permutation of the form \((1\;\;*)\), which is not in \(I\). But if \(i,i+1\in M(n,k)\), then conjugating this with \(w_{0,I}w\) stays inside \(M(n,k)\) and in particular fixes \(k\), so we get
\begin{align*}
    K_w &= \set{(i\;\;i+1)\in I}{i,i+1\in M(n,k)}.
\end{align*}

Hence the Levi subgroup \(H_w\) consists of one block of size \(f(n,k)\), and all the other blocks are of size \(1\) (and of course we still have \(c\)). If we now consider the stabilizer
\[\Pi_w = \set{(A,c)\in H_w(\FFbar_q)\subseteq \GL_n(\FFbar_q)\times \FF_q}{A=c \dot w_{0,I} \dot w (A^{(q)})^{-\top} \dot w\inv \dot w_{0,I}},\]
we see that the block of size \(f(n,k)\) yields the group \(\GU_{f(n,k)}(\FF_q)\) (where \(\GU_0(\FF_q)=\FF_q^\times\)), and the other "blocks" of size 1 get permuted by the cycle \((1\;\;n\;\;2\cdots k)\), yielding that one block determines all the others, together with an equation of the form
\begin{align*}
    a_1 
    &= c a_k^{-q} 
    = \dots 
    = c^* a_2^{\epsilon q^{n-f(n,k)-2}} 
    = c^* a_n^{-\epsilon q^{n-f(n,k)-1}} \\
    &= c^\delta a_1^{\epsilon q^{n-f(n,k)}},
\end{align*}
where \(\epsilon=-1\) if \(k\le\frac{n+1}{2}\) and \(\epsilon=+1\) if \(k>\frac{n+1}{2}\). Furthermore \(\delta=\sum_{i=0}^{n-f(n,k)-1} (-1)^i q^i = \frac{1-(-q)^{n-f(n,k)}}{1+q}\), but this constant does not play a role in the further analysis. Hence we get the following description of \(\Pi_w\):

For \(\epsilon=1\) we get a short exact sequence
\[\begin{tikzcd}[column sep = 45pt]
1 \rar & \Res_{\FF_{q^{n-f(n,k)}}/\FF_q}\Gm(\FF_q) \rar & \Pi_w \rar & \GU_{f(n,k)}(\FF_q) \rar & 1,
\end{tikzcd}\]
and for \(\epsilon=-1\) we get 
\[\begin{tikzcd}[column sep = 65pt]
1 \rar & K(\FF_q) \rar & \Pi_w \rar & \GU_{f(n,k)}(\FF_q) \rar & 1,
\end{tikzcd}\]
where \(K\) is the kernel of \(\Res_{\FF_{q^{2n-2f(n,k)}}/\FF_q}\Gm \twoheadrightarrow \Res_{\FF_{q^{n-f(n,k)}}/\FF_q}\Gm,\;x\mapsto x\cdot \sigma^{n-f(n,k)}(x)\).



\subsection{Pullback to (Truncated) Displays}

We use the definition of (truncated) displays \(G\Disp_\mu^{W_n}\), \(1\le n \le\infty\), from \cite[5.3.7, 5.4]{lau2018higherframesgdisplays}. We have 
\[G\Disp_\mu^{W_1} = G\Zip^{\mu\inv}\]
by \cite[6.2.3]{lau2018higherframesgdisplays}. For \(m\le n\) we have truncation functors \(G\Disp_\mu^{W_n}\to G\Disp_\mu^{W_m}\) (cf. \cite[5.3.8]{lau2018higherframesgdisplays}).
\begin{theorem}
    For \(m\le n<\infty\), the pullback functor 
    \[\P(G\Disp_\mu^{W_m})\to \P(G\Disp_\mu^{W_n})\]
    is fully faithful and preserves simple objects. 
\end{theorem}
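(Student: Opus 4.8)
The plan is to show that the truncation morphism $\pi\colon G\Disp_\mu^{W_n}\to G\Disp_\mu^{W_m}$ (which exists for $m\le n$) is smooth, surjective, of some relative dimension $d$, with geometrically connected fibres that are iterated affine-space bundles over a point (possibly stacky quotients of the shape $[\mathbb{A}^{a}/\mathbb{G}_{a}^{b}]$), and then to run the standard argument that pullback along such a morphism is fully faithful and sends simple objects to simple objects. The structural input I would take from \cite{lau2018higherframesgdisplays}: by \cite[5.3.7, 5.4]{lau2018higherframesgdisplays} each $G\Disp_\mu^{W_k}$ is a smooth algebraic stack of finite type, and on suitable presentations the one-step truncation of \cite[5.3.8]{lau2018higherframesgdisplays} is induced by a map of smooth group schemes with vector-group kernel (the relevant Lie-algebra datum), hence is smooth and a torsor under a vector bundle. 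Composing the finitely many steps between levels $m$ and $n$ makes $\pi$ smooth, surjective, of relative dimension $d=\dim G\Disp_\mu^{W_n}-\dim G\Disp_\mu^{W_m}$, with geometrically connected fibres $F$ satisfying $R\Gamma(F,\QQbar_l)=\QQbar_l$ and carrying only constant $\QQbar_l$-local systems; here one uses that $\mathbb{A}^{1}$ over $\FFbar_q$, and hence $B\mathbb{G}_{a}$, is $\QQbar_l$-acyclic and $\QQbar_l$-simply connected for $\ell\neq p$, the Artin--Schreier phenomena affecting only $p$-torsion coefficients.

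Granting this, full faithfulness is formal. Smooth base change gives $R^{0}\pi_*\QQbar_l=\QQbar_l$ and $R^{i}\pi_*\QQbar_l=0$ for $i>0$, so $R\pi_*\QQbar_l=\QQbar_l$, and the projection formula gives $R\pi_*\pi^*\mathcal{F}\cong\mathcal{F}$ for all $\mathcal{F}\in D^{b}_{c}(G\Disp_\mu^{W_m})$; by adjunction $\pi^*$ is fully faithful on $D^{b}_{c}$. Since $\pi$ is smooth of relative dimension $d$, the pullback functor from the statement (i.e.\ a shift of $\pi^*$ by the relative dimension $d$) is perverse $t$-exact and thus lands in $\P(G\Disp_\mu^{W_n})$, using the formalism for algebraic stacks of \cite[5.5]{laszlo2006perverse}; being the restriction of a fully faithful functor on $D^{b}_{c}$ it is fully faithful.

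To see that simple objects are preserved, write a given simple perverse sheaf on $G\Disp_\mu^{W_m}$ as $(j_Z)_{!*}\bigl(\mathcal{L}[\dim Z]\bigr)$ with $Z$ an irreducible closed substack, $j_Z\colon U\hookrightarrow Z$ a smooth dense open immersion, and $\mathcal{L}$ an irreducible $\QQbar_l$-local system on $U$. Because $\pi$ is smooth with geometrically connected (hence geometrically irreducible) fibres, $\pi^{-1}(Z)$ is again irreducible and closed and $\pi^{-1}(U)$ is smooth and dense in it; because the fibres carry only constant $\QQbar_l$-local systems, $\pi^*$ is an equivalence between the categories of $\QQbar_l$-local systems on $U$ and on $\pi^{-1}(U)$, so $\pi^*\mathcal{L}$ stays irreducible; and because $\pi$ is smooth, the pullback functor commutes with intermediate extension, $\pi^*[d]\circ(j_Z)_{!*}\cong(j_{\pi^{-1}(Z)})_{!*}\circ\pi^*[\,\cdot\,]$ (as in \cite[6.2]{achar2021perverse}, transported to stacks). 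Hence the pullback of the simple object attached to $(Z,\mathcal{L})$ is exactly the simple object attached to $(\pi^{-1}(Z),\pi^*\mathcal{L})$, and in particular simple.

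The main obstacle is the first step: reading off from \cite{lau2018higherframesgdisplays} that the truncation morphisms really are (iterated) vector-bundle torsors --- equivalently, smooth with $\QQbar_l$-acyclic and $\QQbar_l$-simply connected connected fibres --- and verifying that the six-functor input used above (projection formula, smooth base change, $t$-exactness of smooth pullback, compatibility with $j_{!*}$) is available for perverse sheaves on these algebraic stacks via the references already cited. Once that is in place the rest is formal; the hypothesis $n<\infty$ is needed only so that the stacks are of finite type, which is what makes $R\pi_*$ defined on $D^{b}_{c}$ and gives $\pi$ a well-defined relative dimension.
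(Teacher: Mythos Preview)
Your proposal is correct and rests on the same geometric input as the paper: the one-step truncation sits in a short exact sequence
\[
0 \to \Lie(G) \to G(W_{n+1}) \to G(W_n) \to 1,
\]
so the relevant fibres are affine spaces (or stacky quotients thereof by a unipotent vector group), hence smooth, connected, \(\QQbar_l\)-acyclic and \(\QQbar_l\)-simply connected. Where you and the paper diverge is in how this is converted into a statement about perverse sheaves. The paper avoids working with \(R\pi_*\) and the projection formula on stacks: it uses the quotient presentation \(G\Disp_\mu^{W_n}=[G(W_n)/G(\underline W_n)_\mu]\), identifies \(\P(G\Disp_\mu^{W_n})\) with equivariant perverse sheaves on the \emph{scheme} \(G(W_n)\), and then runs the argument through the commutative square
\[
\begin{tikzcd}
\P_{G(\underline W_n)_\mu}\big(G(W_n)\big) \rar \dar[hook] & \P_{G(\underline W_{n+1})_\mu}\big(G(W_{n+1})\big) \dar[hook] \\
\P\big(G(W_n)\big) \rar & \P\big(G(W_{n+1})\big).
\end{tikzcd}
\]
The vertical arrows are fully faithful and detect simplicity because the acting groups \(G(\underline W_k)_\mu\) are connected (\cite[6.2.15]{achar2021perverse}); the bottom arrow is fully faithful and preserves simples by \cite[3.6.6, 3.6.9]{achar2021perverse} applied to the smooth surjection of schemes \(G(W_{n+1})\to G(W_n)\) with connected fibres \(\Lie(G)\); and a diagram chase gives the top arrow. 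Your route is more direct and arguably more conceptual---you stay on the stack and invoke the six-functor formalism of \cite{laszlo2006perverse}---but it requires you to check that \(R\pi_*\), the projection formula, and the compatibility of smooth pullback with \(j_{!*}\) are all available in that setting, whereas the paper's reduction lets one quote off-the-shelf results for schemes.
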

\begin{proof}
    By \cite[5.3.2]{lau2018higherframesgdisplays}, one can write displays as a quotient stack
    \[G\Disp_\mu^{W_n} = [G(W_n)/G(\underline W_n)_\mu],\]
    where \(G(W_n)\) and \(G(\underline W_n)_\mu\) are both affine smooth group schemes over \(\FF_q\) by \cite[5.3.7]{lau2018higherframesgdisplays}. Hence our perverse sheaves are again equivariant perverse sheaves. 
    As \(G(\underline W_n)_\mu\) is connected (\cite{wedhorntruncatedshtukas}), we have a diagram 
    \[\begin{tikzcd}
        \P\big([G(W_n)/G(\underline W_n)_\mu]\big) \rar \dar[hook] & \P\big([G(W_{n+1})/G(\underline W_{n+1})_\mu]\big) \dar[hook] \\
        \P\big(G(W_n)\big) \rar & \P\big(G(W_{n+1})\big),
    \end{tikzcd}\]
    cf. \cite[6.2.15]{achar2021perverse}. The lower horizontal arrow is fully faithful, see \cite[3.6.6]{achar2021perverse} combined with \cite[5.3.8]{lau2018higherframesgdisplays} and the fact that we have a short exact sequence
    \[\begin{tikzcd}
        0 \rar & \Lie(G) \rar & G(W_{n+1}) \rar & G(W_n) \rar & 1,
    \end{tikzcd}\]
    so that the fibers are connected. By \cite[3.6.9]{achar2021perverse} the lower map preserves simple objects, and an object in \(\P\big([G(W_n)/G(\underline W_n)_\mu]\big)\) is simple if and only if it is simple in \(\P\big(G(W_n)\big)\), as the subcategory is closed under subobjects and quotients by \cite[6.2.15]{achar2021perverse}.
\end{proof}


\newpage
\printbibliography

@article {Pink_2015,
    AUTHOR = {Pink, Richard and Wedhorn, Torsten and Ziegler, Paul},
     TITLE = {{$F$}-zips with additional structure},
   JOURNAL = {Pacific J. Math.},
  FJOURNAL = {Pacific Journal of Mathematics},
    VOLUME = {274},
      YEAR = {2015},
    NUMBER = {1},
     PAGES = {183--236},
      ISSN = {0030-8730,1945-5844},
   MRCLASS = {14F05 (14F40 14G15 18D10)},
  MRNUMBER = {3347958},
MRREVIEWER = {Eva\ Viehmann},
       DOI = {10.2140/pjm.2015.274.183},
      eprint={1208.3547},
      archivePrefix={arXiv},
}

@book {Kiehl_Weissauer_2001,
    AUTHOR = {Kiehl, Reinhardt and Weissauer, Rainer},
     TITLE = {Weil conjectures, perverse sheaves and {$l$}'adic {F}ourier
              transform},
    SERIES = {Ergebnisse der Mathematik und ihrer Grenzgebiete. 3. Folge. A
              Series of Modern Surveys in Mathematics [Results in
              Mathematics and Related Areas. 3rd Series. A Series of Modern
              Surveys in Mathematics]},
    VOLUME = {42},
 PUBLISHER = {Springer-Verlag, Berlin},
      YEAR = {2001},
     PAGES = {xii+375},
      ISBN = {3-540-41457-6},
   MRCLASS = {14F20 (11G25 18E30 20G05)},
  MRNUMBER = {1855066},
MRREVIEWER = {James\ Milne},
       DOI = {10.1007/978-3-662-04576-3},
}

@article {laszlo2006perverse,
    AUTHOR = {Laszlo, Yves and Olsson, Martin},
     TITLE = {Perverse {$t$}-structure on {A}rtin stacks},
   JOURNAL = {Math. Z.},
  FJOURNAL = {Mathematische Zeitschrift},
    VOLUME = {261},
      YEAR = {2009},
    NUMBER = {4},
     PAGES = {737--748},
      ISSN = {0025-5874,1432-1823},
   MRCLASS = {14A20 (14D20)},
  MRNUMBER = {2480756},
MRREVIEWER = {Gerhard\ Pfister},
       DOI = {10.1007/s00209-008-0348-z},
      eprint={math/0606175},
      archivePrefix={arXiv},
}

@book {achar2021perverse,
    AUTHOR = {Achar, Pramod N.},
     TITLE = {Perverse sheaves and applications to representation theory},
    SERIES = {Mathematical Surveys and Monographs},
    VOLUME = {258},
 PUBLISHER = {American Mathematical Society, Providence, RI},
      YEAR = {2021},
     PAGES = {xii+562},
      ISBN = {978-1-4704-5597-2},
   MRCLASS = {32-02 (14F08 17B08 17B37 20G05 32S60)},
  MRNUMBER = {4337423},
MRREVIEWER = {Jun\ Hu},
       DOI = {10.1090/surv/258},
}

@article {JOSHUA2022289,
    AUTHOR = {Joshua, Roy},
     TITLE = {Equivariant perverse sheaves and quasi-hereditary algebras},
   JOURNAL = {J. Algebra},
  FJOURNAL = {Journal of Algebra},
    VOLUME = {591},
      YEAR = {2022},
     PAGES = {289--341},
      ISSN = {0021-8693,1090-266X},
   MRCLASS = {14L30 (14F08 14F20 14M27)},
  MRNUMBER = {4337802},
MRREVIEWER = {Song\ Yang},
       DOI = {10.1016/j.jalgebra.2021.10.027},
}

@misc{Clausen2008THESC,
  title={The Springer Correspondence},
  author={Clausen, Dustin and Gaitsgory, Dennis},
  year={2008},
  url={https://api.semanticscholar.org/CorpusID:18827176}
}

@misc{Makisumi2011RedGrpEx,
  title={Structure theory of reductive groups through examples},
  author={Makisumi, Shotaro},
  year={2011},
  url={https://makisumi.com/math/old/reductivegroups.pdf}
}

@book {milne2017algebraic,
    AUTHOR = {Milne, James Stuart},
     TITLE = {Algebraic groups},
    SERIES = {Cambridge Studies in Advanced Mathematics},
    VOLUME = {170},
      NOTE = {The theory of group schemes of finite type over a field},
 PUBLISHER = {Cambridge University Press, Cambridge},
      YEAR = {2017},
     PAGES = {xvi+644},
      ISBN = {978-1-107-16748-3},
   MRCLASS = {14L15 (14-01 17B45 20-01 20G15)},
  MRNUMBER = {3729270},
MRREVIEWER = {Boris\ \`E.\ Kunyavski\u i},
       DOI = {10.1017/9781316711736},
}

@book {serre1977linear,
    AUTHOR = {Serre, Jean-Pierre},
     TITLE = {Linear representations of finite groups},
    SERIES = {Graduate Texts in Mathematics},
    VOLUME = {42},
   EDITION = {French},
 PUBLISHER = {Springer-Verlag, New York-Heidelberg},
      YEAR = {1977},
     PAGES = {x+170},
      ISBN = {0-387-90190-6},
   MRCLASS = {20CXX},
  MRNUMBER = {450380},
MRREVIEWER = {W.\ Feit},
       DOI = {10.1007/978-1-4684-9458-7},
}

@article {Viehmann_2013,
    AUTHOR = {Viehmann, Eva and Wedhorn, Torsten},
     TITLE = {Ekedahl-{O}ort and {N}ewton strata for {S}himura varieties of
              {PEL} type},
   JOURNAL = {Math. Ann.},
  FJOURNAL = {Mathematische Annalen},
    VOLUME = {356},
      YEAR = {2013},
    NUMBER = {4},
     PAGES = {1493--1550},
      ISSN = {0025-5831,1432-1807},
   MRCLASS = {14G35 (11G18)},
  MRNUMBER = {3072810},
MRREVIEWER = {Marc-Hubert\ Nicole},
       DOI = {10.1007/s00208-012-0892-z},
      eprint={1011.3230},
      archivePrefix={arXiv},
}

@article {lau2018higherframesgdisplays,
    AUTHOR = {Lau, Eike},
     TITLE = {Higher frames and {$G$}-displays},
   JOURNAL = {Algebra Number Theory},
  FJOURNAL = {Algebra \& Number Theory},
    VOLUME = {15},
      YEAR = {2021},
    NUMBER = {9},
     PAGES = {2315--2355},
      ISSN = {1937-0652,1944-7833},
   MRCLASS = {14F30 (14D15)},
  MRNUMBER = {4355476},
MRREVIEWER = {Min\ Ho\ Lee},
       DOI = {10.2140/ant.2021.15.2315},
      eprint={1809.09727},
      archivePrefix={arXiv},
}

@article {pink2011algebraiczipdata,
    AUTHOR = {Pink, Richard and Wedhorn, Torsten and Ziegler, Paul},
     TITLE = {Algebraic zip data},
   JOURNAL = {Doc. Math.},
  FJOURNAL = {Documenta Mathematica},
    VOLUME = {16},
      YEAR = {2011},
     PAGES = {253--300},
      ISSN = {1431-0635,1431-0643},
   MRCLASS = {14L30 (14M27 20G15)},
  MRNUMBER = {2804513},
MRREVIEWER = {Eva\ Viehmann},
      eprint={1010.0811},
      archivePrefix={arXiv},
}

@misc{wedhorntruncatedshtukas,
    title={Truncated shutkas and displays},
    author={Torsten Wedhorn},
    note={to appear},
}

@article {shen2021stratificationsgoodreductionsshimura,
    AUTHOR = {Shen, Xu and Zhang, Chao},
     TITLE = {Stratifications in good reductions of {S}himura varieties of
              abelian type},
   JOURNAL = {Asian J. Math.},
  FJOURNAL = {Asian Journal of Mathematics},
    VOLUME = {26},
      YEAR = {2022},
    NUMBER = {2},
     PAGES = {167--226},
      ISSN = {1093-6106,1945-0036},
   MRCLASS = {14G35 (11G18)},
  MRNUMBER = {4557079},
MRREVIEWER = {Yifan\ Yang},
       DOI = {10.4310/ajm.2022.v26.n2.a2},
      eprint={1707.00439},
      archivePrefix={arXiv},
}

@book {fultonharris,
    AUTHOR = {Fulton, William and Harris, Joe},
     TITLE = {Representation theory},
    SERIES = {Graduate Texts in Mathematics},
    VOLUME = {129},
      NOTE = {A first course,
              Readings in Mathematics},
 PUBLISHER = {Springer-Verlag, New York},
      YEAR = {1991},
     PAGES = {xvi+551},
      ISBN = {0-387-97527-6},
   MRCLASS = {20G05 (17B10 20G20 22E46)},
  MRNUMBER = {1153249},
MRREVIEWER = {James\ E.\ Humphreys},
       DOI = {10.1007/978-1-4612-0979-9},
}
\addcontentsline{toc}{section}{References}


\end{document}